\documentclass[envcountsect,referee]{svjour3}

\smartqed  
\usepackage{graphicx}
\usepackage{latexsym,bm,amsmath,amssymb,mathrsfs}
\usepackage{rotating}
\usepackage{multirow,booktabs,cases}
\usepackage[misc]{ifsym}
\usepackage[style=1]{mdframed}
\usepackage{algorithm,algorithmic}
\usepackage[colorlinks=true]{hyperref}
\hypersetup{urlcolor=blue,citecolor=blue,linkcolor=blue}
\usepackage{epstopdf}

\def\[{\begin{equation}}
\def\]{\end{equation}}

\newtheorem{assum}{Assumption}[section]
\newtheorem{exam}{Example}[section]

\numberwithin{equation}{section}
\allowdisplaybreaks

\begin{document}
\graphicspath{{./PIC/}}

\title{Generalized tensor equations with leading structured tensors}


\author{Weijie Yan\and Chen Ling\and Liyun Ling \and Hongjin He}


\institute{W. Yan\and C. Ling \and H. He \at
Department of Mathematics, School of Science, Hangzhou Dianzi University, Hangzhou, 310018, China.\\
\email{yanweijie1314@163.com}
\and  C. Ling  \at
\email{macling@hdu.edu.cn}
\and H. He (\Letter) \at
\email{hehjmath@hdu.edu.cn}
\and L. Ling \at
School of Information Engineering, Hangzhou Dianzi University, Hangzhou, 310018, China.
\email{lingliyun@163.com}
 }

\date{Received: date / Accepted: date}

\maketitle

\begin{abstract}
The system of tensor equations (TEs) has received much considerable attention in the recent literature. In this paper, we consider a class of generalized tensor equations (GTEs). An important difference between GTEs and TEs is that GTEs can be regarded as a system of non-homogenous polynomial equations, whereas TEs is a homogenous one. Such a difference usually makes the theoretical and algorithmic results tailored for TEs not necessarily applicable to GTEs. To study properties of the solution set of GTEs, we first introduce a new class of so-named ${\rm Z}^+$-tensor, which includes the set of all P-tensors as its proper subset. With the help of degree theory, we prove that the system of GTEs with a leading coefficient ${\rm Z}^+$-tensor has at least one solution for any right-hand side vector. Moreover, we study the local error bounds under some appropriate conditions. Finally, we employ a Levenberg-Marquardt algorithm to find a solution to GTEs and report some preliminary numerical results.
\end{abstract}

\keywords{Generalized tensor equations \and ${\rm Z}^+$-tensor \and P-tensor \and error bound \and Levenberg-Marquardt algorithm.}

\section{Introduction}\label{Introd}
Let ${\cal A}=(a_{i_1i_2\cdots i_m})$ with $a_{i_1i_2\cdots i_m}\in \mathbb{R}$ for $i_1,i_2,\cdots,i_m \in [n]:=\{1,2,\cdots,n\}$ be an $m$-th order $n$-dimensional square real tensor and $b\in \mathbb{R}^n$. The system of tensor equations (or multi-linear system) investigated in the literature refers to the task of finding a vector $x\in{\mathbb R}^n$ such that
\begin{equation}\label{mulineq}
\mathcal{A}x^{m-1}=b,
\end{equation}
where $\mathcal{A}{x}^{m-1}$ is defined as a vector, whose $i$-th component
is given by
\begin{equation}\label{Axm-1}
(\mathcal{A}{x}^{m-1})_i=\sum_{i_2,\cdots,i_m=1}^na_{ii_2\cdots i_m}x_{i_2}\cdots x_{i_m}, \quad i=1,2,\cdots,n.
\end{equation}
Recently, it has been well-documented that the system of tensor equations \eqref{mulineq} arises in a number of applications such as data mining \cite{LN15}, numerical partial  differential equations \cite{DW16}, and tensor complementarity problems \cite{SQ16,XLX17}. Therefore,  the system of tensor equations \eqref{mulineq} has received much considerable attention in the recent literature, e.g., see \cite{DW16,Han17,HLQZ18,LXX17,LDG19,LLV18,LM18,XJW18} and references therein. Especially, in \cite{DW16},  Ding and Wei proved that, if the coefficient tensor $\mathcal{A}$ in (\ref{mulineq}) is a nonsingular M-tensor \cite{DQW13,Zhang3}, then the problem (\ref{mulineq}) has a unique positive solution for any given positive vector $b$ (i.e., each  component of $b$ is positive)  in $\mathbb{R}^n$, in addition to generalizing the Jacobi and Gauss-Seidel methods to find the unique solution. Since solving tensor equations system plays an instrumental role in engineering and scientific computing, many numerical methods have been developed to solve \eqref{mulineq} with M-tensors, e.g., see \cite{Han17,HLQZ18,LXX17,LLV18,XJW18}. However, the coefficient tensor ${\mathcal A}$ of \eqref{mulineq} arising from many real-world problems, such as data mining \cite{LN15}, tensor complementarity problems \cite{SQ16,XLX17} and high dimensional interpolations in the reproducing kernel Banach spaces \cite{Y18}, is often not a nonsingular M-tensor. Moreover, we observe that \eqref{mulineq} is a system of homogenous polynomial equations, but some applications usually lack of the underlying homogeneousness emerging in \eqref{mulineq}, for example the high-order Markov chains \cite{LN14} and multilinear PageRank problems \cite{GLY15}. Unfortunately, for the aforementioned two cases, it is unclear that whether \eqref{mulineq} has solutions for any vector $b\in \mathbb{R}^n$ when the coefficient tensor $\mathcal{A}$ is not a nonsingular M-tensor, and the numerical algorithms tailored for \eqref{mulineq} still work or not.

In this paper, we consider a class of so-named generalized tensor equations (GTEs), which can be written as
\begin{equation}\label{defMS}
{\cal A}_1x^{m-1}+{\cal A}_2x^{m-2}+\cdots+{\cal A}_{m-2}x^2+{\cal A}_{m-1}x = b,
\end{equation}
where ${\cal A}_k \in \mathbb{T}_{m-k+1,n}~(k=1,2,\cdots,m-1)$ and $b\in \mathbb{R}^n$. Here, we denote the set of all $l$-th order $n$-dimensional square real tensors by $\mathbb{T}_{l,n}$ for $l=2,3,\cdots,m$.
Apparently, \eqref{mulineq} falls into a special case of \eqref{defMS} with settings of ${\mathcal A_1}={\mathcal A}$ and ${\mathcal A}_i$ ($i=2,\cdots,m-1$) being zero tensors.

Although the system of GTEs is an interesting generalization of \eqref{mulineq}, to the best of our knowledge, there is no paper contributed to the solution existence of \eqref{defMS} with any right-hand vector $b\in{\mathbb R}^n$. Thus, the first contribution of this paper is to show that \eqref{defMS} has at least one solution for any $b\in{\mathbb R}^n$ when the leading coefficient tensor ${\mathcal A}_1$ is a ${\rm Z}^+$-tensor, where ${\rm Z}^+$-tensor is a newly introduced structured tensor in the paper, which includes the set of all P-tensors as its proper set. As a byproduct of our results, \eqref{mulineq} has a solution for any $b\in{\mathbb R}^n$ when ${\mathcal A}$ is a ${\rm Z}^+$-tensor, in addition to showing that \eqref{mulineq} has a unique solution if ${\mathcal A}$ is a strong P-tensor. Such a theoretical result is an interesting complement to \cite{DW16}. Moreover, we study the local error bounds under some appropriate conditions, which is the second contribution of this paper and plays an important role in algorithmic design. Since most of the recent numerical algorithms are devoted to \eqref{mulineq} with M-tensors, we employ an efficient Levenberg-Marquardt algorithm to find numerical solutions of the generalized tensors equations \eqref{defMS}. The computational results demonstrate that the proposed Levenberg-Marquardt algorithm is competitive to the state-of-art algorithms in \cite{Han17,HLQZ18,LXX17} when dealing with \eqref{mulineq} with M-tensors. More promisingly, the Levenberg-Marquardt algorithm performs well for both \eqref{mulineq} and \eqref{defMS} with generic tensors with a relatively high probability.

The remainder of the paper is organized as follows. In Section \ref{prelim}, we will summarize some definitions on tensors and introduce a new class of structured tensors, which includes many class of special tensors as its proper subset. In Section \ref{ExistRes}, by utilizing the topological degree theory, we will present an existence result on solutions for the system of generalized tensor equations (\ref{defMS}).  In Section \ref{LocalERR}, we give some properties on local error bounds under appropriate conditions. To find a solution of (\ref{defMS}), we employ a Levenberg-Marquardt algorithm in Section \ref{Algorithm}.  Some preliminary numerical results on synthetic data are reported in Section \ref{numTest}. Finally, we conclude the paper with drawing some remarks in Section \ref{Concl}.

\noindent{\bf Notation}. Let $\mathbb{R}^n$ be the space of $n$-dimensional real column vectors and $\mathbb{R}_+^n=\{x=(x_1,x_2,\cdots,x_n)^\top\in \mathbb{R}^n:x_i\geq 0,~~\forall~i=1,2,\cdots,n\}$. A vector of zeros in a real space of arbitrary dimension will be denoted by ${\bm 0}$. For any $x, y\in \mathbb{R}^n$, the Euclidean inner product is denoted by $x^\top y$, and the Euclidean norm $\|x\|$ is given by $\|x\|=\sqrt{x^\top x}$. For given $\mathcal{A}=(a_{i_1i_2\cdots i_m})\in \mathbb{T}_{m,n}$, if the entries $a_{i_1i_2\cdots i_m}$ are invariant under any permutation of their indices, then $\mathcal{A}$ is called a {\it symmetric} tensor. In particular, for every given index
$i\in [n]:=\{1,2,\cdots,n\}$, if an $(m-1)$-th order $n$-dimensional square tensor $\mathcal{A}_i:=(a_{ii_2\cdots i_m})_{1 \leq i_2,\cdots,i_m \leq n}$ is symmetric, then $\mathcal{A}$ is called a {\it semi-symmetric} tensor with respect to the indices $\{i_2,\cdots,i_m\}$. Denote the unit tensor in $\mathbb{T}_{m,n}$ by $\mathcal{I}=(\delta_{i_1\cdots i_m})$, where $\delta_{i_1\cdots i_m}$ is the Kronecker symbol
\begin{equation*}
\delta_{i_1\cdots i_m}=\left\{
\begin{array}{ll}
1,&\;\;{\rm if~}i_1=\cdots =i_m,\\
0,&\;\;{\rm otherwise}.
\end{array}
\right.\end{equation*}
With the notation \eqref{Axm-1}, we define ${\mathcal A}x^m = x^\top ({\mathcal A}x^{m-1})$ for ${\mathcal A}\in {\mathbb T}_{m,n}$ and $x\in {\mathbb R}^n$.
Moreover, ${\mathcal A}x^{m-2}$ denotes an $n\times n$ matrix whose $ij$-th component is given by
$$({\mathcal A}x^{m-2})_{ij}:= \sum^n_{i_3, ..., i_m = 1}a_{ij i_3 ... i_m}x_{i_3}\cdots x_{i_m} ,\quad i,\;j=1,2,\cdots,n.$$

\section{Preliminaries}\label{prelim}
In this section, we first summarize some definitions on tensors that will
be used in the coming analysis, and then introduce a new class of structured tensors.

\begin{definition}\label{def2.3}
Let $\mathcal{A} \in \mathbb{T}_{m,n}$. We say that $\mathcal{A}$ is

\begin{itemize}
\itemindent 6pt
\item[(i)] a P-tensor (see \cite{SQ14}), if it holds that $\max\limits_{1\leq i\leq n}x_i(\mathcal{A}x^{m-1})_i > 0$ for any vector $x \in \mathbb{R}^n\backslash\{{\bm 0}\}$.
\item[(ii)] a strong P-tensor (see \cite{BHW16}), if it holds that $\max\limits_{1\leq i\leq n}(x_i-y_i)(\mathcal{A}x^{m-1}-\mathcal{A}y^{m-1})_i > 0$ for any vectors $x, y \in \mathbb{R}^n$ with $x\neq y$.
\item[(iii)] a positive definite tensor, if it holds that $\mathcal{A}x^{m}>0$ for any vector $x \in \mathbb{R}^n\backslash\{{\bm 0}\}$.
\end{itemize}

\end{definition}

\begin{definition}[\cite{WHQ18}]\label{ICPAii1}
Let ${\mathcal A}\in {\mathbb T}_{m,n}$. We say that ${\mathcal A}$ is a strictly positive definite tensor, if it holds that $
(x-y)^\top(\mathcal{A} x^{m-1}-\mathcal{A}y^{m-1})>0$ for any $x,y\in \mathbb{R}^n$ with $x\neq y$.
\end{definition}

From Definitions \ref{def2.3} and \ref{ICPAii1}, it is easy to see that a strictly positive definite tensor must be a strong P-tensor, and a strong P-tensor must be a P-tensor. However, a  strong P-tensor is not necessarily a strictly positive definite tensor, which will be shown in the following example.
\begin{exam}\label{exam2-1}
Let ${\mathcal A}=(a_{i_1i_2i_3i_4})\in\mathbb{T}_{4,2}$ with $a_{1111}=a_{2222}=a_{1122}=1$, $a_{1222}=-3$ and all other $a_{i_1i_2i_3i_4}=0$. For any $x,y\in \mathbb{R}^2$ with $x\neq y$, it is easy to see that
\begin{align}\label{Equat1}
(x_1-y_1)(\mathcal{A} x^3-\mathcal{A} y^3)_1=&\;(x_1-y_1)(x_1^3-y_1^3)-3(x_1-y_1)(x_2^3-y_2^3) \nonumber\\
&\;+(x_1-y_1)(x_1x_2^2-y_1y_2^2)
\end{align}
and \begin{equation}\label{Equat2}
(x_2-y_2)(\mathcal{A}  x^3-\mathcal{A}y^3)_2=(x_2-y_2)(x_2^3-y_2^3).
\end{equation}
We now consider two cases:

{\bf Case (i).} If $x_2\neq y_2$, it follows from \eqref{Equat2} that
\begin{align}\label{Equatt2}
(x_2-y_2)(\mathcal{A}  x^3-\mathcal{A}y^3)_2&=(x_2-y_2)^2(x_2^2+x_2y_2+y_2^2)\nonumber\\
&\geq \frac{1}{4}(x_2-y_2)^2(x_2^2-2x_2y_2+y_2^2) \nonumber \\
&= \frac{1}{4}(x_2-y_2)^4>0.
\end{align}

{\bf Case (ii).} If $x_2= y_2$, it immediately follows from the fact $x\neq y$ that $x_1\neq y_1$. In this situation, equation \eqref{Equat1} reduces to
\begin{align*}
(x_1-y_1)(\mathcal{A} x^3-\mathcal{A} y^3)_1&=(x_1-y_1)(x_1^3-y_1^3)+x_2^2(x_1-y_1)^2 \\
&\geq  \frac{1}{4}(x_1-y_1)^4+x_2^2(x_1-y_1)^2>0,
\end{align*}
where the first inequality can be derived by a similar technique used in \eqref{Equatt2}.
Hence, we know that $\max\limits_{1\leq i \leq 2}( x_i- y_i)(\mathcal{A} x^3-\mathcal{A} y^3)_i>0$ for any $x,y\in \mathbb{R}^2$ with $x\neq y$, which means that $\mathcal{A}$ is a strong P-tensor.

However, by taking $\bar x=(1,0)^\top$ and $\bar y=(0,-1)^\top$, we know that $(\bar x-\bar y)^\top (\mathcal{A}\bar x^3-\mathcal{A}\bar y^3)=-1<0$, which means that $\mathcal{A}$ is not a strictly positive definite tensor. Moreover, the tensor $\mathcal{A}$ in this example is not an $M$-tensor (see \cite{DW16}).
\end{exam}

\begin{definition}[\cite{FP03}]\label{SMonote}
A mapping $\Phi:\Omega\subseteq \mathbb{R}^n\rightarrow \mathbb{R}^n$ is said to be strictly monotone on $\Omega$ if and only if
$(x-y)^\top (\Phi(x)-\Phi(y)) > 0$ for all $x, y \in \Omega$ with $x\neq y$.
\end{definition}

For given $\mathcal{A}\in \mathbb{T}_{m,n}$ and $b\in \mathbb{R}^n$, defined by $\Phi(x)=\mathcal{A}x^{m-1}+b$. From Definitions  \ref{ICPAii1} and \ref {SMonote}, it is easy to see that $\Phi$ is strictly monotone on $\mathbb{R}^n$ if and only if the tensor $\mathcal{A}$ is strictly positive definite.

\begin{definition}[\cite{YLH17}]\label{NonsingularDef}
 Let $\mathcal{A}\in \mathbb{T}_{m,n}$. We say that $\mathcal{A}$ is singular, if $\mathcal{A}$ satisfies
 $$\{x\in \mathbb{R}^n\backslash\{{\bm 0}\}~|~\mathcal{A}x^{m-1}={\bm 0}\}\neq \emptyset.$$
 Otherwise, we say that $\mathcal{A}$ is nonsingular.
\end{definition}

Now, we introduce a new class of structured tensors, which includes the set of all $P$-tensors as its proper subset.
\begin{definition}\label{Efeef}
 Let ${\mathcal A}\in {\mathbb T}_{m,n}$. We say that ${\mathcal A}$ is a ${\rm Z^+}$-tensor, if there exists no $(x,t)\in(\mathbb{R}^n\backslash\{ {\bm 0}\})\times\mathbb{R}_+$ such that
\begin{equation}\label{equation1.2}
{\mathcal A}x^{m-1}+tx={\bm 0}.
\end{equation}
\end{definition}

It is obvious that, ${\mathcal A}$ is a ${\rm Z^+}$-tensor if and only if $\mathcal{A}$ has no non-positive Z-eigenvalue (see \cite{Qi05}). Furthermore, it can also be seen that if $\mathcal{A}$ is a ${\rm Z}^+$-tensor, then $\mathcal{A}$ is nonsingular.

\begin{proposition}\label{P-WR}
Let $\mathcal{A} \in \mathbb{T}_{m,n}$. If $\mathcal{A}$ is a P-tensor, then $\mathcal{A}$ is a ${\rm Z^+}$-tensor.
\end{proposition}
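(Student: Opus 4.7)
The plan is to argue by contradiction, negating the $\mathrm{Z}^+$ property and showing that the resulting witness $(x,t)$ immediately violates the definition of a P-tensor. Suppose $\mathcal{A}$ is not a $\mathrm{Z}^+$-tensor. Then, by Definition \ref{Efeef}, there exists a pair $(x,t) \in (\mathbb{R}^n \setminus \{\bm{0}\}) \times \mathbb{R}_+$ satisfying
\[
\mathcal{A} x^{m-1} + t x = \bm{0},
\]
equivalently $(\mathcal{A} x^{m-1})_i = -t x_i$ for each $i \in [n]$.

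Next, I would multiply componentwise by $x_i$ to get $x_i (\mathcal{A} x^{m-1})_i = -t x_i^2$ for every $i$. Since $t \geq 0$ and $x_i^2 \geq 0$, each such product is nonpositive, so
\[
\max_{1 \leq i \leq n} x_i (\mathcal{A} x^{m-1})_i \leq 0.
\]
Because $x \neq \bm{0}$, this directly contradicts the defining property of a P-tensor given in Definition \ref{def2.3}(i), which requires the maximum to be strictly positive. Hence $\mathcal{A}$ must be a $\mathrm{Z}^+$-tensor.

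Honestly, there is no substantive obstacle here: the argument is a one-line contradiction that uses only the sign information coming from $t \geq 0$ together with $x_i^2 \geq 0$. The slightly delicate point, if any, is simply being careful about the case $t = 0$ (so that one is really using $\mathbb{R}_+$ rather than the strict positive reals in Definition \ref{Efeef}), but this causes no trouble since $-t x_i^2 \leq 0$ holds regardless of whether $t = 0$ or $t > 0$. I would also remark that this inclusion is proper in general, which is already hinted at by the paper's stated containment of P-tensors as a proper subset of $\mathrm{Z}^+$-tensors.
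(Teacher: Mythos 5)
Your proof is correct and follows exactly the paper's argument: contradiction via a witness $(x,t)$ of the failure of the $\mathrm{Z}^+$ property, componentwise multiplication by $x_i$, and the sign observation $-t x_i^2 \leq 0$ to contradict Definition \ref{def2.3}(i). No differences worth noting.
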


\begin{proof} Suppose that $\mathcal{A}$ is not a ${\rm  Z^+}$-tensor. Then, it follows from Definition \ref{Efeef} that there exists $(\bar x, \bar t)\in (\mathbb{R}^n\backslash\{ {\bm 0}\})\times\mathbb{R}_+$ such that (\ref{equation1.2}) holds. Therefore, we have
$$
\bar x_i(\mathcal{A}\bar x^{m-1})_i + \bar t\bar x^2_i=0, ~~~\forall ~i\in [n],
$$
which, together with $\bar t\geq 0$, implies that
\begin{equation}\label{TThr}
\max\limits_{1\leq i \leq n}\bar x_i(\mathcal{A}\bar x^{m-1})_i= -\min\limits_{1\leq i \leq n}\bar t \bar x_i^2\leq 0.
\end{equation}
Clearly, it contradicts to the given condition that $\mathcal{A}$ is a P-tensor. The proof is completed.
\qed\end{proof}

It was proved by Qi \cite{Qi05} that  Z-eigenvalues exist for an even order
real symmetric tensor $\mathcal{A}$, and $\mathcal{A}$ is {\it positive definite} (PD) if and only if all of its
Z-eigenvalues are positive, i.e., $\mathcal{A}$ is a ${\rm Z^+}$-tensor. Hence, in the symmetric tensor case, the concepts of PD, P and ${\rm  Z^+}$-tensors are identical. We also know that if $\mathcal{A}$ is an $m$-th order P-tensor, then $m$ must be even (see \cite{YY14}). So, there does not exist an odd order symmetric ${\rm Z^+}$-tensor. In the asymmetric tensor case, when $m=3$, we also claim that there does not exist a ${\rm Z^+}$-tensor. Indeed, for any $\mathcal{A}\in \mathbb{T}_{3,n}$, we know that, if $\lambda$ is a Z-eigenvalue of $\mathcal{A}$, then $-\lambda$ is also a Z-eigenvalue of $\mathcal{A}$, and $\mathcal{A}$ has odd Z-eigenvalues in the complex field (see \cite{Ful98}). Consequently, we know that $\mathcal{A}$  always has a real non-positive Z-eigenvalue, which means that $\mathcal{A}$ is not a ${\rm Z^+}$-tensor. So, even if in the asymmetric case, there does not exist a three order real ${\rm Z^+}$-tensor. However, in the case where $m\geq 4$, it is unclear that whether odd order real asymmetric ${\rm Z^+}$-tensors exist or not?

As proved in Proposition \ref{P-WR}, a P-tensor must be a ${\rm Z}^+$-tensor, but not
conversely. The following example is to show that a ${\rm Z^+}$-tensor is not necessarily a P-tensor.

\begin{exam}\label{exam11-1}
Let ${\mathcal A}=(a_{i_1i_2i_3i_4})\in\mathbb{T}_{4,2}$ with $a_{1111}=a_{1222}=a_{2111}=10$, $a_{1112}=a_{2122}=1$, $a_{1122}=2$, $a_{2112}=20$ and $a_{2222}=-8$. Then, for $x\in{\mathbb R}^2$, we have
 $$
 \mathcal{A}x^3=
 \left (
 \begin{array}{c}
 10x_1^3+x_1^2x_2
+2x_1x_2^2+10x_2^3 \\
10x_1^3+20x_1^2x_2
+x_1x_2^2-8x_2^3
\end{array}\right).
 $$
 We first claim that there is no $(x,t)\in(\mathbb{R}^2\backslash\{{\bm 0}\})\times\mathbb{R}_+$ such that \eqref{equation1.2} holds. Actually, if there exists a pair of $(\bar x,\bar t)\in(\mathbb{R}^2\backslash\{{\bm 0}\})\times\mathbb{R}_+$ such that \eqref{equation1.2} holds, then
 \begin{subnumcases}{\label{rree1}}
 10\bar x_1^3+\bar x_1^2\bar x_2
+2\bar x_1\bar x_2^2+10\bar x_2^3+\bar t\bar x_1=0,  \label{rree1a}\\
10\bar x_1^3+20\bar x_1^2\bar x_2
+\bar x_1\bar x_2^2-8\bar x_2^3+\bar t\bar x_2=0. \label{rree1b}
\end{subnumcases}
Without loss of generality, we suppose $\bar x_1\neq0$. It then follows from \eqref{rree1a} that
\begin{equation}\label{bart1}
\bar t=-(10\bar x_1^3+\bar x_1^2\bar x_2
+2\bar x_1\bar x_2^2+10\bar x_2^3)/\bar x_1.
\end{equation}
Consequently, substituting \eqref{bart1} into \eqref{rree1b} immediately yields
$$
10\bar x_2^4+10\bar x_1^3\bar x_2-10\bar x_1\bar x_1^3\bar x_2-10\bar x_1^4=0,
$$
which can be recast as
\begin{equation}\label{eqa}\bar s^4+\bar s^3-\bar s-1=(\bar s-1)(\bar s+1)(\bar s^2+\bar s+1)=0,
\end{equation}
where $\bar s=\bar x_2/\bar x_1\in \mathbb{R}$.  It is not difficult to observe that \eqref{eqa} has two different real roots. Correspondingly, if $\bar s=1$, then $\bar x_2=\bar x_1$ and \eqref{bart1} reduces to $\bar t=-23\bar x_1^2<0$, which contradicts  to $\bar t\in \mathbb{R}_+$. If $\bar s=-1$, then $\bar x_2=-\bar x_1$ and \eqref{bart1} can be specified as $\bar t=-\bar x_1^2<0$, which also contradicts  to $\bar t\in \mathbb{R}_+$. Hence, $\bar x_1=0$, which further implies that $\bar x_2=0$ by \eqref{rree1a}. Then, we have $\bar x=(\bar x_1, \bar x_2)^\top = {\bm 0}$, which contradicts to $\bar x\neq {\bm 0}$. Therefore, we know that $\mathcal{A}$ is a ${\rm Z^+}$-tensor. However, by taking $\tilde{x}=(1,-2)^\top\in \mathbb{R}^2\backslash\{{\bm 0}\}$, we have
$$
\max\left\{\tilde{x}_1(\mathcal{A}\tilde{x}^3)_1,\tilde{x}_2(\mathcal{A}\tilde{x}^3)_2\right\}=-64<0,
$$
which means that $\mathcal{A}$ is not a P-tensor.
\end{exam}

\begin{remark} Notice that ${\rm Z}^+$-tensor is a new concept introduced in this
paper. As showed in Proposition \ref{P-WR} and Example \ref{exam11-1}, ${\rm Z}^+$-tensor is a generalization of ${\rm P}$-tensor. Interestingly, the set of all ${\rm P}$-tensors includes many class of important structured tensors as its proper subset, for example, PD-tensors, even order strictly diagonally dominated tensors (\cite[Theorem 3.4]{YY14}), strong ${\rm P}$-tensors (\cite{BHW16}), even order Hilbert tensors (\cite[Theorem 1.1]{SQ14a}), even order strongly doubly
nonnegative tensors (\cite[Proposition 5.1]{LQ14}), even order strongly completely positive tensors (\cite[Definition 3.3]{DLQ15}), even order nonsingular ${\rm H}$-tensors with all positive diagonal entries
(\cite[Proposition 4.1]{DLQ15}), even order Cauchy tensors with mutually distinct entries of
generating vector (\cite[Corollary 4.4]{DLQ15}) and so on. If an even order ${\rm Z}$-tensor $\mathcal{A}$ is a ${\rm B}$-tensor \cite{SQ14}, then $\mathcal{A}$ is also a ${\rm P}$-tensor (\cite[Theorem 3.6]{YY14}). So, we believe that ${\rm Z}^+$-tensor is also a class of interesting structured tensors for future tensor analysis.
\end{remark}

\section{Existence of solutions to \eqref{defMS}}\label{ExistRes}
In this section, we focus on studying the existence of solutions for (\ref{defMS}) with the help of topological degree theory. We begin this section with presenting the following proposition of boundedness of the solution set of (\ref{defMS}).

\begin{proposition}\label{boundedness}
Let $\Lambda:=(\mathcal{A}_1,\mathcal{A}_2,\cdots,\mathcal{A}_{m-1})\in \mathbb{T}_{m,n}\times \mathbb{T}_{m-1,n}\times\cdots \times\mathbb{T}_{2,n}$. If the leading tensor $\mathcal{A}_1$ in $\Lambda$ is nonsingular, then the solution set ${\rm SOL}(\Lambda,b)$ of \eqref{defMS} is bounded for any $b\in \mathbb{R}^n$.
\end{proposition}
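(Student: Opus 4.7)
The plan is to argue by contradiction, invoking a standard homogeneity/normalization trick that exploits the fact that $\mathcal{A}_1 x^{m-1}$ is the highest-order term in the left-hand side of \eqref{defMS}.

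First I would fix $b\in\mathbb{R}^n$, suppose for contradiction that $\mathrm{SOL}(\Lambda,b)$ is unbounded, and pick a sequence $\{x^k\}\subseteq \mathrm{SOL}(\Lambda,b)$ with $\|x^k\|\to\infty$. Set $y^k:=x^k/\|x^k\|$. Since $\|y^k\|=1$ for all $k$, by the Bolzano--Weierstrass theorem I may pass to a subsequence and assume $y^k\to y^*$ for some $y^*\in\mathbb{R}^n$ with $\|y^*\|=1$, so in particular $y^*\neq\bm 0$.

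Next I would normalize the equation \eqref{defMS} satisfied by $x^k$ by dividing both sides by $\|x^k\|^{m-1}$. Using the $(m-k+1)$-homogeneity of $x\mapsto\mathcal{A}_k x^{m-k}$, this yields
\[
\mathcal{A}_1(y^k)^{m-1}+\sum_{k=2}^{m-1}\frac{1}{\|x^k\|^{k-1}}\,\mathcal{A}_k(y^k)^{m-k}=\frac{b}{\|x^k\|^{m-1}}.
\]
Because each $\mathcal{A}_k(y^k)^{m-k}$ stays bounded (the $y^k$ lie on the unit sphere and each tensor--vector product is a continuous function on a compact set) while $\|x^k\|^{k-1}\to\infty$ for every $k\ge 2$, every term beyond the leading one vanishes in the limit, as does the right-hand side. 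Continuity of $x\mapsto\mathcal{A}_1 x^{m-1}$ then gives $\mathcal{A}_1(y^*)^{m-1}=\bm 0$ with $y^*\neq\bm 0$, contradicting the nonsingularity of $\mathcal{A}_1$ as stated in Definition \ref{NonsingularDef}. This contradiction would complete the proof.

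There is no real obstacle here; the argument is a direct consequence of the leading-order dominance and the definition of nonsingularity. The only point that deserves care is the routine but essential bookkeeping that each lower-order term $\mathcal{A}_k x^{m-k}$ contributes a factor $\|x^k\|^{m-k}$ after the substitution $x^k=\|x^k\|y^k$, so that dividing through by the leading scale $\|x^k\|^{m-1}$ leaves a vanishing coefficient $\|x^k\|^{-(k-1)}$ in front of a bounded tensor expression. Once this is noted, the passage to the limit is straightforward and no extra hypothesis on $\mathcal{A}_2,\dots,\mathcal{A}_{m-1}$ is required.
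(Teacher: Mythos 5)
Your proof is correct and follows essentially the same route as the paper: normalize the unbounded solution sequence, divide the equation by $\|x^k\|^{m-1}$, pass to a convergent subsequence on the unit sphere, and let the lower-order terms vanish to contradict the nonsingularity of $\mathcal{A}_1$. The only cosmetic difference is that you explicitly invoke Bolzano--Weierstrass and spell out the homogeneity bookkeeping, which the paper leaves implicit.
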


\begin{proof}
Suppose that ${\rm SOL}(\Lambda,b)$ is unbounded for some $\bar b\in \mathbb{R}^n$. Then there exists a sequence $\{x^r\}_{r=1}^\infty\subset{\rm SOL}(\Lambda,b)$ such that $\|x^r\|\rightarrow\infty$ as $r\rightarrow\infty$. Since $x^r\in {\rm SOL}(\Lambda,b)$, we have
\begin{equation}\label{Wett}\sum_{k=1}^{m-1}\frac{{\cal A}_k}{\|x^r\|^{k-1}}\left(\frac{x^r}{\|x^r\|}\right)^{m-k}=\frac{\bar b}{\|x^r\|^{m-1}}.
\end{equation}
Without loss of generality, we assume that $x^r/\|x^r\|\rightarrow \bar{x}$ as $r \rightarrow \infty$. It is clear that $\bar x\neq{\bm 0}$. Consequently, by letting $r\rightarrow\infty$ in (\ref{Wett}), we know $\mathcal{A}_1\bar x^{m-1}={\bm 0}$, which means that $\mathcal{A}_1$ is singular. It is a contradiction. Therefore, ${\rm SOL}(\Lambda,b)$ is bounded. The proof is completed.
\qed\end{proof}

When $m=2$, the problem \eqref{defMS} reduces to a linear system $Ax=b$, where $A\in \mathbb{R}^{n\times n}$ and $b\in \mathbb{R}^n$. It is well-known that $Ax=b$ is solvable for any $b\in \mathbb{R}^n$ if and only if $A$ is nonsingular, in this case, the solution of $Ax=b$ is unique. However, when $m\geq 3$, we could cannot ensure that $\mathcal{A}x^{m-1}=b$ is solvable for any $b\in \mathbb{R}^n$, even though $\mathcal{A}\in \mathbb{T}_{m,n}$ is nonsingular. For example, for the unit tensor $\mathcal{I}\in \mathbb{T}_{3,n}$, it is clear that $\mathcal{I}$ is nonsingular, but $\mathcal{I}x^2=-b$ has no real solution for any $b\in \mathbb{R}^n_+\backslash \{{\bm 0}\}$.

To study the existence of solutions for nonlinear equations, nonlinear complementarity
problems, and variational inequalities, a variety of concepts
of exceptional families of elements for continuous functions were
introduced in the literature (e.g., see, \cite{Isa01,IBK97,S84,ZH99,ZHQ99} and the references therein). Below, we introduce the definition of exceptional family of elements for a function.

\begin{definition}\label{def2.2}
Let $G:\mathbb{R}^n\rightarrow \mathbb{R}^n $ be a continuous function. We say that a set of elements $\{x^r\}_{r>0} \subset \mathbb{R}^n$ is an {\it exceptional family of elements} (in short, EFE)
for $G$, if the following conditions are satisfied:
\begin{itemize}
\item[(1)] $\|x^r\|\rightarrow \infty$ as $r \rightarrow \infty $,
\item[(2)] for each real number $r>0$, there exists a $\mu_r>0$ such that
$G(x^r)=-\mu_rx^r$.
\end{itemize}
\end{definition}

Let $\Omega$ be a bounded open set in $\mathbb{R}^n$ and $\partial \Omega$ represent the boundary of $\Omega$.  For a continuous function $U: \mathbb{R}^n \rightarrow \mathbb{R}^n$ and a vector $b\not\in U(\partial \Omega)$,  the degree of $U$ over $\Omega$ with respect to $b$ is defined, which is an integer and will be denoted by ${\rm deg} (U, \Omega, b)$. Here, we refer the reader to \cite{FFG95,LN78} for more details on degree theory. Now, we recall two fundamental theorems in the topological degree theory (e.g., see \cite[p. 23]{I06} and also \cite{LN78}), which play important roles in the proofs of our main results on the existenceness of solutions of \eqref{defMS}.

\begin{theorem}[Poincar\'{e}-Bohl Theorem]\label{th3}
Let $\Omega\subset \mathbb{R}^n$ be a bounded open set, $b\in \mathbb{R}^n$ and $U, V:\mathbb{R}^n\rightarrow \mathbb{R}^n$ be two continuous functions. If for all $x \in \partial\Omega$ the line segment $[U(x),V(x)]$ does not contain $b$,
then it holds that ${\rm deg}(U,\Omega,b) = {\rm deg}(V,\Omega,b)$.
\end{theorem}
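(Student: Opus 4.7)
The plan is to apply the homotopy invariance property of the Brouwer (topological) degree to a straight-line homotopy between $U$ and $V$. Specifically, I would define $H:\overline{\Omega}\times[0,1]\to\mathbb{R}^n$ by
\[
H(x,t)=(1-t)U(x)+tV(x).
\]
Since $U$ and $V$ are continuous on $\mathbb{R}^n$, the map $H$ is jointly continuous in $(x,t)$, and each slice $H(\cdot,t)$ is a continuous function from $\mathbb{R}^n$ to $\mathbb{R}^n$ that is a legitimate candidate for computing a degree on $\Omega$ with respect to $b$.

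The second step is to check admissibility of the homotopy on the boundary. For any fixed $x\in\partial\Omega$, as $t$ ranges over $[0,1]$ the point $H(x,t)$ traces out exactly the line segment $[U(x),V(x)]$. By the standing hypothesis, this segment does not contain $b$, so $H(x,t)\neq b$ for every $(x,t)\in\partial\Omega\times[0,1]$. Equivalently, $b\notin H(\partial\Omega\times[0,1])$, which is precisely the condition needed to ensure that $\deg(H(\cdot,t),\Omega,b)$ is well-defined for every $t\in[0,1]$.

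The final step is to invoke the homotopy invariance of the degree (a foundational property available in \cite{FFG95,LN78}): the integer-valued function $t\mapsto \deg(H(\cdot,t),\Omega,b)$ is constant on $[0,1]$. Evaluating at the endpoints yields
\[
\deg(U,\Omega,b)=\deg(H(\cdot,0),\Omega,b)=\deg(H(\cdot,1),\Omega,b)=\deg(V,\Omega,b),
\]
which is the desired equality.

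The only substantive ingredient is the homotopy invariance of the Brouwer degree itself, and that is not really an obstacle here: it is one of the axiomatic/fundamental properties of degree and is established in the cited references, so I would simply invoke it rather than re-derive it. The remaining work is purely a matter of packaging the straight-line homotopy and verifying boundary admissibility, both of which follow immediately from the hypothesis on the segments $[U(x),V(x)]$.
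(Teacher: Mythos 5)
Your proposal is correct: the straight-line homotopy $H(x,t)=(1-t)U(x)+tV(x)$, the observation that the boundary condition on the segments $[U(x),V(x)]$ is exactly the admissibility condition $b\notin H(\partial\Omega\times[0,1])$, and the appeal to homotopy invariance of the Brouwer degree together constitute the standard and complete derivation of the Poincar\'{e}--Bohl theorem. Note, however, that the paper itself gives no proof of this statement --- it is recalled as a known result and cited from the degree-theory literature --- so there is no in-paper argument to compare against; your write-up is the proof that those references supply, and it is sound modulo the homotopy invariance property, which is legitimately taken as a foundational axiom of the degree.
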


\begin{theorem}[Kronecker's Theorem]\label{th4}
Let $\Omega\subset \mathbb{R}^n$ be a bounded open set, $b\in \mathbb{R}^n$ and $U:\mathbb{R}^n\rightarrow \mathbb{R}^n$ be a continuous function. If ${\rm deg}(U,\Omega,b)$ is defined and non-zero, then the equation $U(x)=b$ has a solution in $\Omega$.
\end{theorem}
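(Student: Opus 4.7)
The plan is to prove the contrapositive: assuming $U(x)=b$ has no solution in $\Omega$, I would deduce that ${\rm deg}(U,\Omega,b)=0$, contradicting the hypothesis. The key ingredient is to approximate $U$ by a smooth map and apply the constructive Brouwer-degree formula together with the Poincar\'{e}--Bohl theorem just established.

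First, I would upgrade the hypothesis to a uniform estimate. Since ${\rm deg}(U,\Omega,b)$ is well defined, $b\notin U(\partial\Omega)$; combined with the supposition that $U(x)\neq b$ throughout $\Omega$, this gives $b\notin U(\overline{\Omega})$. Compactness of $\overline{\Omega}$ and continuity of $U$ then furnish $\delta>0$ with $\|U(x)-b\|\geq\delta$ for every $x\in\overline{\Omega}$.

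Next, I would approximate $U$ by a smooth map $\widetilde{U}\in C^\infty(\overline{\Omega},\mathbb{R}^n)$ with $\|\widetilde{U}(x)-U(x)\|<\delta/2$ uniformly, obtained by continuously extending $U$ to a slightly larger open neighborhood and then mollifying. The straight-line homotopy $H(x,t):=(1-t)U(x)+t\widetilde{U}(x)$ satisfies $\|H(x,t)-b\|\geq\delta/2>0$ on $\partial\Omega\times[0,1]$, so the Poincar\'{e}--Bohl theorem yields ${\rm deg}(U,\Omega,b)={\rm deg}(\widetilde{U},\Omega,b)$. Moreover $\widetilde{U}^{-1}(b)\cap\Omega=\emptyset$, so $b$ is vacuously a regular value of $\widetilde{U}|_\Omega$, and the defining formula of the Brouwer degree as a signed count of preimages forces ${\rm deg}(\widetilde{U},\Omega,b)=0$. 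This contradicts the assumption ${\rm deg}(U,\Omega,b)\neq 0$.

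The main obstacle I anticipate is the smooth-approximation step: to invoke the Brouwer formula one needs a genuinely $C^1$ map defined on an open neighborhood of $\overline{\Omega}$ while preserving the uniform bound $\|U-b\|\geq\delta$ that keeps the approximation $b$-avoiding. This amounts to a Tietze-type extension of $U$ beyond $\overline{\Omega}$ followed by convolution with a standard mollifier, a construction that is routine in every presentation of degree theory (see for instance Isac's treatment cited above) and requires no new ideas. Once this is in place, the Poincar\'{e}--Bohl theorem together with the vanishing-degree property for smooth maps with empty preimage closes the argument in one line.
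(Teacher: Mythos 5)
The paper does not prove this statement: Theorem~\ref{th4} is presented as a recalled classical fact of topological degree theory, with the reader pointed to \cite[p.~23]{I06} and \cite{LN78}, so there is no in-paper argument to compare yours against. Your contrapositive argument --- uniform separation $\|U(x)-b\|\geq\delta$ on $\overline{\Omega}$, smooth approximation within $\delta/2$, Poincar\'{e}--Bohl to transfer the degree, and the vacuous regular-value count giving degree zero --- is the standard textbook proof and is correct as written; the only implicit dependence worth noting is that it presupposes the Brouwer degree has been \emph{constructed} via the signed-preimage formula for smooth maps at regular values, which is consistent with the references the paper cites but is not something the paper itself develops.
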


By using the concept of EFE and Theorems \ref{th3} and \ref{th4}, we state and prove the following theorem.

\begin{theorem}\label{th2.6}
For a continuous function $F(x): \mathbb{R}^n\rightarrow\mathbb{R}^n$, there exists either a solution to $F(x)={\bm 0}$ or an exceptional family of elements for $F$.
\end{theorem}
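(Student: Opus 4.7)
The plan is to compare $F$ on each open ball $B_r := \{x\in\mathbb{R}^n : \|x\|<r\}$ to the identity map $I$ via the straight-line homotopy $H(t,x) := tF(x) + (1-t)x$, and to use Theorems~\ref{th3} and \ref{th4} to dichotomize: either the homotopy is admissible on some $\partial B_r$ (which yields a solution), or the failure of admissibility on every $\partial B_r$ produces the required EFE.

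First I would record the elementary fact $\deg(I,B_r,{\bm 0}) = 1$ for every $r>0$, since ${\bm 0}\in B_r$ and $I$ is a homeomorphism. Next I would analyze, for fixed $r>0$, when the segment $[F(x), I(x)] = [F(x),x]$ contains ${\bm 0}$ for some $x\in\partial B_r$: this happens iff there exist $x\in\partial B_r$ and $t\in[0,1]$ with $tF(x)+(1-t)x={\bm 0}$. Because $\|x\|=r>0$ we cannot have $t=0$; if $t=1$ for some such $x$, then $F(x)={\bm 0}$ and we are done; otherwise $t\in(0,1)$ and setting $\mu := (1-t)/t > 0$ gives $F(x) = -\mu x$.

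Case one: there exists $r>0$ such that the segment $[F(x),x]$ avoids ${\bm 0}$ for every $x\in\partial B_r$. Then Theorem~\ref{th3} gives $\deg(F,B_r,{\bm 0}) = \deg(I,B_r,{\bm 0}) = 1 \ne 0$, and Theorem~\ref{th4} produces a solution of $F(x)={\bm 0}$ in $B_r$.

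Case two: for every $r>0$ the segment $[F(x),x]$ meets ${\bm 0}$ at some $x^r\in\partial B_r$. If $F(x^r)={\bm 0}$ for some $r$, we again have a solution; otherwise the analysis above yields $\mu_r>0$ with $F(x^r) = -\mu_r x^r$ and $\|x^r\|=r$. Since $\|x^r\|=r\to\infty$ as $r\to\infty$, the family $\{x^r\}_{r>0}$ satisfies both conditions of Definition~\ref{def2.2} and is therefore an EFE for $F$.

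The argument is essentially a routine degree-theoretic dichotomy; there is no serious obstacle once Theorems~\ref{th3} and \ref{th4} are in hand. The only point deserving care is the case split on $t\in\{0,1\}$ versus $t\in(0,1)$ when the segment $[F(x),x]$ passes through ${\bm 0}$, to ensure that the boundary cases do not obstruct either the extraction of a solution or the construction of the EFE.
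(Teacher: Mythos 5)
Your proposal is correct and follows essentially the same route as the paper's own proof: both compare $F$ to the identity on balls $B_r$ via a straight-line homotopy, invoke the Poincar\'{e}--Bohl and Kronecker theorems to extract a solution when the homotopy avoids ${\bm 0}$ on some sphere, and otherwise read off the exceptional family from the points where the homotopy vanishes (your parametrization $tF+(1-t)I$ is just the paper's $tG+(1-t)F$ with $t\mapsto 1-t$, so $\mu_r=(1-t)/t$ versus $t_r/(1-t_r)$). No substantive difference.
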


\begin{proof}
For any real number $r>0$, we consider the spheres and open ball with radius $r$, respectively, i.e.,
$$S_r = \{x \in \mathbb{R}^n:\|x\| = r\} \quad {\rm and} \quad B_r = \{x \in \mathbb{R}^n:\|x\| < r\}.$$
Obviously, it can be seen that $\partial B_r = S_r$. We now consider the homotopy between the identity function $G(x)=x$ and $F$, which is  defined by
\begin{equation}\label{Hxt}
H(x,t) = tG(x) + (1-t)F(x), ~~\forall~~(x,t) \in S_r \times [0,1].
\end{equation}
Applying Theorems \ref{th3} and \ref{th4} to $H$ defined by \eqref{Hxt}, we have the following two situations:

(i)~There exist some $r>0$ such that $H(x,t)\neq {\bm 0}$ for any $x\in S_r$ and $t \in [0,1]$. In this situation, Theorem \ref{th3} implies that ${\rm deg}(F,B_r,{\bm 0})={\rm deg}(G,B_r,{\bm 0})$. Because ${\rm deg}(G,B_r,{\bm 0}) = 1$, we know ${\rm deg}(F,B_r,{\bm 0}) = 1$. Consequently, by Theorem \ref{th4}, we know that the ball $B_r$ contains at least one solution to the equation $F(x)={\bm 0}$.

(ii)~For each $r>0$, there exists a point $x^r \in S_r$ and a scalar $t_r \in [0,1]$ such that $H(x^r,t_r)={\bm 0}$. If $t_r = 0$, then $x^r$ is a solution of $F(x) = {\bm 0}$. Secondly, if $t_r = 1$, it is clear from the definition of $H(x,t)$ that
$t_r G(x^r) + (1-t_r)F(x^r) = x^r = {\bm 0}$, which contradicts the fact that $\|x^r\| = r > 0$. Finally, if $0 < t_r < 1$, it then follows from the definition of $H(x,t)$ that
$$  {\bm 0}=\frac{1}{1-t_r}[t_rG(x^r) + (1-t_r)F(x^r)]=\frac{t_r}{(1-t_r)}G(x^r) + F(x^r). $$
Letting $\mu_r = \frac{t_r}{1-t_r}$, we have $F(x^r) = -\mu_r G(x^r)= -\mu_r x^r$. Due to the fact $\|x^r\| = r$, it holds that $\|x^r\|\rightarrow \infty$ as $r \rightarrow \infty $. Thus, from Definition {\ref{def2.2}}, we know that $\{x^r\}$ is an exceptional family of elements
for $F$.
\qed\end{proof}

Throughout, for given $\Lambda:=(\mathcal{A}_1,\mathcal{A}_2,\cdots,\mathcal{A}_{m-1})\in \mathbb{T}_{m,n}\times \mathbb{T}_{m-1,n}\times\cdots \times\mathbb{T}_{2,n}$ and $b\in \mathbb{R}^n$, we denote by ${\rm SOL}(\Lambda,b)$ the solution set of \eqref{defMS}, i.e.,
\begin{equation}\label{solut}
{\rm SOL}(\Lambda,b):=\{x\in \mathbb{R}^n:F(x)={\bm 0}\},
\end{equation}
where the function $F : \mathbb{R}^n\rightarrow \mathbb{R}^n$ is given by
\begin{equation}\label{Feq}
F(x)={\cal A}_1x^{m-1}+{\cal A}_2x^{m-2}+\cdots+{\cal A}_{m-2}x^2+{\cal A}_{m-1}x - b.
\end{equation}

By Theorem \ref{th2.6}, we have the solutions existence theorem for \eqref{defMS} as follows.

\begin{theorem}\label{Exists}
Let $\Lambda:=(\mathcal{A}_1,\mathcal{A}_2,\cdots,\mathcal{A}_{m-1})\in \mathbb{T}_{m,n}\times \mathbb{T}_{m-1,n}\times\cdots \times\mathbb{T}_{2,n}$. Suppose that the leading tensor ${\mathcal A}_1$ in $\Lambda$ is a ${\rm Z^+}$-tensor. Then, the solution set ${\rm SOL}(\Lambda,b)$ of \eqref{defMS} is nonempty and compact  for any $b\in \mathbb{R}^n$.
\end{theorem}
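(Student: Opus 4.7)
The plan is to combine Theorem~\ref{th2.6} with Proposition~\ref{boundedness}: compactness of ${\rm SOL}(\Lambda,b)$ follows almost for free, so the real work is in the nonemptiness. Note first that a ${\rm Z}^+$-tensor is nonsingular (as observed right after Definition~\ref{Efeef}), hence Proposition~\ref{boundedness} yields the boundedness of ${\rm SOL}(\Lambda,b)$; closedness is immediate from the continuity of the map $F$ in \eqref{Feq}. So compactness is in hand once nonemptiness is established.

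For nonemptiness I would argue by contradiction. Apply Theorem~\ref{th2.6} to $F$ defined by \eqref{Feq}: if $F(x)={\bm 0}$ admits no solution, there exists an exceptional family of elements $\{x^r\}_{r>0}\subset\mathbb{R}^n$ with scalars $\mu_r>0$ such that
\[
\mathcal{A}_1(x^r)^{m-1}+\sum_{k=2}^{m-1}\mathcal{A}_k(x^r)^{m-k}-b=-\mu_r x^r,\qquad \|x^r\|\to\infty.
\]
Divide both sides by $\|x^r\|^{m-1}$ and set $y^r:=x^r/\|x^r\|$ and $t_r:=\mu_r/\|x^r\|^{m-2}$, which yields
\[
\mathcal{A}_1(y^r)^{m-1}+\sum_{k=2}^{m-1}\frac{1}{\|x^r\|^{k-1}}\mathcal{A}_k(y^r)^{m-k}-\frac{b}{\|x^r\|^{m-1}}=-t_r y^r.
\]
By compactness of the unit sphere, pass to a subsequence with $y^r\to\bar y$, $\|\bar y\|=1$.

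The argument then splits according to whether $\{t_r\}$ is bounded. If $t_r\to\bar t\in[0,\infty)$ (along a further subsequence), passing to the limit kills the lower-order terms (each carries a factor $\|x^r\|^{-(k-1)}$ with $k\geq 2$), and one obtains $\mathcal{A}_1\bar y^{m-1}+\bar t\bar y={\bm 0}$ with $(\bar y,\bar t)\in(\mathbb{R}^n\setminus\{{\bm 0}\})\times\mathbb{R}_+$, directly violating Definition~\ref{Efeef}. If instead $t_r\to\infty$, I rescale differently: multiply the displayed equation by $1/t_r$ so the right-hand side becomes $-y^r\to-\bar y$, while every term on the left carries either a factor $1/t_r$ or $1/(t_r\|x^r\|^{k-1})$ and hence vanishes in the limit, giving ${\bm 0}=-\bar y$ with $\|\bar y\|=1$, again a contradiction.

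I expect the minor obstacle to be making sure the two regimes for $\{t_r\}$ are handled in a single clean dichotomy; beyond that the proof is largely bookkeeping once Theorem~\ref{th2.6} and the ${\rm Z}^+$ property are invoked. The essential point is that the ${\rm Z}^+$ hypothesis is tailored exactly to rule out the limiting equation $\mathcal{A}_1\bar y^{m-1}+\bar t\bar y={\bm 0}$ that an exceptional family would produce after the natural homogeneity rescaling.
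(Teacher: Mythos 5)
Your proof is correct and follows essentially the same route as the paper: invoke Theorem~\ref{th2.6} to produce an exceptional family, divide by $\|x^r\|^{m-1}$, pass to a limit on the unit sphere, and contradict the ${\rm Z}^+$ property, with compactness coming from Proposition~\ref{boundedness} and closedness of $F^{-1}({\bm 0})$. The only cosmetic difference is that the paper reads off the boundedness of $\mu_r/\|x^r\|^{m-2}$ directly from the rescaled equation (all other terms are bounded, so that term must be too), whereas you handle the unbounded case by a separate rescaling; both dispatch that regime correctly.
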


\begin{proof}
We first prove the nonemptyness of ${\rm SOL}(\Lambda,b)$. Suppose, on the contrary, that ${\rm SOL}(\Lambda,b)=\emptyset$. Then, by Theorem \ref{th2.6}, we know that there exists an exceptional family of elements $\{x^r\}_{r > 0}$  for $F$ defined in (\ref{Feq}), i.e., $\{x^r\}_{r > 0}$ satisfies
$\|x^r\|\rightarrow \infty$ as $r \rightarrow \infty $, and for each real number $r>0$, there exists a scalar $\mu_r>0$ such that
$$
\sum_{k=1}^{m-1}{\cal A}_k(x^r)^{m-k}-b = -\mu_rx^r,
$$
which implies
\begin{equation}\label{TTr}
\sum_{k=1}^{m-1}\frac{{\cal A}_k}{\|x^r\|^{k-1}}\left(\frac{x^r}{\|x^r\|}\right)^{m-k}+ \frac{\mu_r}{\|x^r\|^{m-2}}\frac{x^r}{\|x^r\|} = \frac{b}{\|x^r\|^{m-1}}.
\end{equation}
It can be easily seen from \eqref{TTr} that $\{\mu_r/\|x^r\|^{m-2}\}$ is bounded. Without loss of generality, we assume that $x^r/\|x^r\|\rightarrow \bar{x}$ and $\mu_r/\|x^r\|^{m-2}\rightarrow \bar t$ as $r \rightarrow \infty$. It is clear that $\bar x\neq{\bm 0}$ and $\bar t\geq 0$. Since $\|x^r\|\rightarrow\infty$ as $r\rightarrow\infty$, by taking $r \rightarrow \infty$ in (\ref{TTr}), it holds that
$\mathcal{A}_1\bar{x}^{m-1} + \bar t\bar{x} = {\bm 0}$,
which contradicts to the given condition that $\mathcal{A}_1$ is a ${\rm Z^+}$-tensor. Therefore, ${\rm SOL}(\Lambda,b)$ is nonempty.

Now we prove the compactness of ${\rm SOL}(\Lambda,b)$. It is clear that ${\rm SOL}(\Lambda,b)$ is closed.
Moreover, $\mathcal{A}_1$  is nonsingular since it is a ${\rm Z}^+$-tensor. By Proposition \ref{boundedness}, we conclude that ${\rm SOL}(\Lambda,b)$ is bounded.

Hence, we obtain the desired result and complete the proof. \qed\end{proof}

As a byproduct of Theorem \ref{Exists}, we immediately obtain the existence of solutions of \eqref{mulineq}, which can be viewed as an interesting complement to the result discussed in \cite{DW16}.

\begin{corollary}\label{ExistGTeq}
Suppose that the coefficient tensor ${\mathcal A}$ in \eqref{mulineq} is a ${\rm Z^+}$-tensor. Then the system of tensor equations \eqref{mulineq} always has a solution for any $b\in \mathbb{R}^n$.
\end{corollary}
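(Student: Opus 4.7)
The plan is to realize Corollary \ref{ExistGTeq} as an immediate specialization of Theorem \ref{Exists}. First I would observe that the tensor equation \eqref{mulineq} is precisely the instance of the generalized tensor equation \eqref{defMS} obtained by setting $\mathcal{A}_1 = \mathcal{A}$ and letting $\mathcal{A}_2, \mathcal{A}_3, \ldots, \mathcal{A}_{m-1}$ all be the zero tensors of the appropriate orders (this identification is already pointed out in the introduction, right after \eqref{defMS}).

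Next I would form the tuple $\Lambda := (\mathcal{A}, \mathbf{0}, \ldots, \mathbf{0}) \in \mathbb{T}_{m,n} \times \mathbb{T}_{m-1,n} \times \cdots \times \mathbb{T}_{2,n}$. Under this identification, the solution set of \eqref{mulineq} coincides with ${\rm SOL}(\Lambda, b)$ as defined in \eqref{solut}. The hypothesis that $\mathcal{A}$ is a ${\rm Z}^+$-tensor translates directly into the assumption of Theorem \ref{Exists} that the leading coefficient tensor $\mathcal{A}_1$ of $\Lambda$ is a ${\rm Z}^+$-tensor. Applying Theorem \ref{Exists} then yields that ${\rm SOL}(\Lambda, b)$ is nonempty (and in fact compact) for every $b \in \mathbb{R}^n$, which is the claim.

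There is really no obstacle here: the only thing to be careful about is the cosmetic check that the zero tensors of orders $m-1, m-2, \ldots, 2$ indeed lie in the spaces $\mathbb{T}_{m-1,n}, \ldots, \mathbb{T}_{2,n}$ so that $\Lambda$ is a legitimate element of the product space used in Theorem \ref{Exists}, and that the resulting function $F$ from \eqref{Feq} reduces to $F(x) = \mathcal{A} x^{m-1} - b$. Both are immediate, so the proof will be essentially a one-line invocation of Theorem \ref{Exists}.
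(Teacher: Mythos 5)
Your proposal is correct and matches the paper's intent exactly: the paper derives this corollary as an immediate byproduct of Theorem \ref{Exists} by viewing \eqref{mulineq} as the special case of \eqref{defMS} with $\mathcal{A}_1=\mathcal{A}$ and the remaining coefficient tensors zero, which is precisely your argument. Nothing is missing.
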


Moreover, we have the following uniqueness result on solution of \eqref{mulineq}.

\begin{theorem}\label{Unique}
Suppose that the coefficient tensor ${\mathcal A}$ in \eqref{mulineq} is a strong P-tensor, then the system of tensor equations \eqref{mulineq} always has a unique solution for any $b\in \mathbb{R}^n$.
\end{theorem}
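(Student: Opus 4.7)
The plan is to establish existence and uniqueness separately, with both steps being short and direct consequences of already-developed results.

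For existence, I would first observe that a strong P-tensor is automatically a P-tensor: specialize the defining inequality of a strong P-tensor (Definition \ref{def2.3}(ii)) by taking $y={\bm 0}$, which gives $\max_{1\leq i\leq n} x_i(\mathcal{A}x^{m-1})_i > 0$ for every $x\neq{\bm 0}$. Then Proposition \ref{P-WR} tells us $\mathcal{A}$ is a ${\rm Z}^+$-tensor, and Corollary \ref{ExistGTeq} immediately yields that \eqref{mulineq} has at least one solution for every $b\in\mathbb{R}^n$.

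For uniqueness, I would argue by contradiction. Suppose $x,y\in\mathbb{R}^n$ are two distinct solutions of \eqref{mulineq}. Then $\mathcal{A}x^{m-1}-\mathcal{A}y^{m-1}=b-b={\bm 0}$, so $(\mathcal{A}x^{m-1}-\mathcal{A}y^{m-1})_i=0$ for every $i\in[n]$. Hence $(x_i-y_i)(\mathcal{A}x^{m-1}-\mathcal{A}y^{m-1})_i=0$ for all $i$, which gives $\max_{1\leq i\leq n}(x_i-y_i)(\mathcal{A}x^{m-1}-\mathcal{A}y^{m-1})_i=0$. This directly contradicts the strong P-tensor property of $\mathcal{A}$, which demands this maximum be strictly positive whenever $x\neq y$. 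Thus the solution must be unique.

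There is essentially no technical obstacle here; both parts reduce to invoking the correct prior result and observing the trivial implication between the relevant defining inequalities. The only thing worth noting for clarity is the reduction from strong P-tensor to P-tensor by substituting $y={\bm 0}$, since otherwise Proposition \ref{P-WR} cannot be invoked directly.
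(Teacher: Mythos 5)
Your proposal is correct and follows essentially the same route as the paper: existence via strong P $\Rightarrow$ P $\Rightarrow$ ${\rm Z}^+$ (Proposition \ref{P-WR}) and Corollary \ref{ExistGTeq}, and uniqueness by the same contradiction with the strong P-tensor inequality. Your explicit justification of the step ``strong P-tensor implies P-tensor'' by substituting $y={\bm 0}$ is a welcome clarification of a point the paper leaves implicit.
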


\begin{proof}
It first follows from Definition \ref{def2.3} and Proposition \ref{P-WR} that ${\mathcal A}$ is a ${\rm Z^+}$-tensor. As a consequence of Corollary \ref{ExistGTeq}, we know that (\ref{mulineq}) has at least one solution  for any $b\in \mathbb{R}^n$. Suppose that both $\bar x$ and $\bar y$ are different solutions of \eqref{mulineq}, i.e., $\bar x\neq\bar y$ and $\mathcal{A}\bar x^{m-1}=\mathcal{A}\bar y^{m-1}=b$. It then can be easily seen that
$$\max\limits_{1\leq i \leq n}(\bar x_i-\bar y_i)(\mathcal{A} \bar x^{m-1}-\mathcal{A}\bar y^{m-1})_i=0,$$
which contradicts the condition that ${\mathcal A}$ is a strong P-tensor. Hence, we prove the desired result.
\qed\end{proof}

From Example \ref{exam2-1} and definitions of M-tensor (see \cite{Zhang3}) and strong P-tensor, we know that the set of all strong P-tensors and the set of all M-tensors do not contain each other. Hence, the existence result obtained above is different from Theorem 3.2 presented in \cite{DW16}. Additionally, the following example will show that the condition of ${\mathcal A}$ being a strong P-tensor in Theorem \ref{Unique} can neither be removed nor be replaced by the positive definiteness of tensor.

\begin{exam}\label{Exam3-1}
 Let $\mathcal{A}=(a_{i_1i_2i_3i_4})\in \mathbb{T}_{4,2}$ with $a_{1111}=2$, $a_{1122}=-3/2$, $a_{1222}=1$ and $a_{2222}=5/2$,  and all others $a_{i_1i_2i_3i_4}=0$. It is easy to see that for any $x\in \mathbb{R}^2\setminus\{{\bm 0}\}$, we have
 \begin{align*}
 \mathcal{A} x^4&=2x_1^4-(3/2)x_1^2x_2^2+x_1x_2^3+(5/2)x_2^4\\
 &\geq2x_1^4-(3/2)x_1^2x_2^2-(1/2)(x_1^2+x_2^2)x_2^2+(5/2)x_2^4\\
 &=(x_1^2-x_2^2)^2+x_1^4+x_2^4>0,
 \end{align*}
where the inequality comes from the fact that $2|x_1x_2|\leq x_1^2+x_2^2$ for any $x_1,x_2\in \mathbb{R}$, which means that $\mathcal{A}$ is a positive definite tensor. However, for $b=(6,20)^\top$, it is easy to see that $\mathcal{A}x^3=b$ can be specified as
\begin{numcases}{}
2x_1^3-(3/2)x_1x_2^2+x_2^3=6, \nonumber\\
 (5/2)x_2^3=20, \nonumber
\end{numcases}
 which implies $x_2=2$ and $2x_1^3-6x_1+2=0$. Consequently, it is not difficult to check that $2x_1^3-6x_1+2=0$ has two different positive real roots (denoted by $\hat{x}_1,\tilde{x}_1$) and a negative real root $\bar x_1$, so all $(\bar x_1,2)^\top$, $(\hat{x}_1,2)^\top$ and $(\tilde{x}_1,2)^\top$ are the solutions of $\mathcal{A}x^3=b$. On the other hand, if we take $b=(2,20)^\top$, then it is easy to see that the resulting tensor equations $\mathcal{A}x^{3}=b$ has only one solution $(\tilde{x}_1,2)^\top$ with $\tilde{x}_1<0$.
 \end{exam}

 As proved in \cite{DW16}, if the coefficient tensor $\mathcal{A}$ in (\ref{mulineq}) is a nonsingular M-tensor, then (\ref{mulineq}) has a unique positive solution for any given positive vector $b\in\mathbb{R}^n$. However, this example shows that, even if $\mathcal{A}$ in (\ref{mulineq}) is a positive definite tensor, we could not ensure that (\ref{mulineq}) has positive solutions for a positive vector $b\in \mathbb{R}^n$.

\section{Local error bound}\label{LocalERR}
In this section, we are going to study the local error bound condition that will play an important role in the convergence analysis of Levenberg-Marquardt algorithm presented in the next section. We begin this section by recalling the following definitions.

\begin{definition}[\cite{LP94,YF01}] \label{errorbound} Consider $W(x)={\bm 0}$, where $W:\mathbb{R}^n\rightarrow\mathbb{R}^n$ is a continuous function. Let $N$ be a subset of $\mathbb{R}^n$ such that $N\cap X\neq\emptyset$, where $X=\{x\in \mathbb{R}^n~|~W(x)={\bm 0}\}$. We say that $\|W(x)\|$ provides a local error bound on $N$ for $W(x)={\bm 0}$, if there exists a positive constant $c>0$ such
that
\begin{equation*}\label{errq}
\|W(x)\|\geq c ~{\rm dist}(x, X)
\end{equation*}
holds for any $x\in N$, where ${\rm dist}(x, X)=\inf_{y\in X}\|y-x\|$.
\end{definition}

It is well-known that, if the Jacobian matrix $W^\prime(\bar x)$ of $W$ at the solution $\bar x$ of $W(x)={\bm 0}$ is nonsingular, then $\bar x$ is an isolated solution. Hence, $\|W(x)\|$ provides a local error bound on some neighborhood of $\bar x$. However, the reverse claim is not necessarily true. The reader is referred to \cite{YF01} and the following example of tensor version.

\begin{exam}\label{exam3-1} Let ${\mathcal A}=(a_{i_1i_2i_3i_4})\in\mathbb{T}_{4,2}$ with $a_{1111}=a_{1222}=1$, $a_{1112}=a_{1122}=3$, $a_{2111}=a_{2222}=2$, $a_{2112}=a_{2122}=6$ and all other $a_{i_1i_2i_3i_4}=0$, and $b=(1,2)^\top$. Let $W(x)=\mathcal{A}x^{3}-b$. Then we have
$$
W(x)=
\left[
\begin{array}{c}
(x_1+x_2)^3-1\\
2(x_1+x_2)^3-2
\end{array}
\right].$$
For $x\in \mathbb{R}^2$, it holds that $\|W(x)\|=\sqrt{5}|(x_1+x_2)^3-1|$ and ${\rm dist}(x,X)=|x_1+x_2-1|/\sqrt{2}$, where $X=\{x\in \mathbb{R}^2:x_1+x_2=1\}$. Since $\lim_{\alpha\rightarrow 1}\frac{\alpha^3-1}{\alpha-1}=3$, it is easy to see that $|(x_1+x_2)^3-1|/|x_1+x_2-1|\geq 5/4$ for any $x\in N:=\{x\in \mathbb{R}^2~:~|x_1+x_2-1|\leq 1/2\}$. Consequently, we obtain
$$\|W(x)\|=\sqrt{5}~|(x_1+x_2)^3-1|\geq 5\sqrt{5}/4|x_1+x_2-1|=5\sqrt{10}/4~{\rm dist}(x,X)$$
for any $x\in N$. By taking $c=5\sqrt{10}/4$, we know that $\|W(x)\|$ provides a local error bound on $N$ for $W(x)={\bm 0}$.
However, it is clear that the Jacobian matrix $W^\prime(x)$ of $W$ is singular for any $x\in \mathbb{R}^2$.
\end{exam}

We now study the condition under which the local error bound holds. To this end, we make the following assumption on the function $F$ defined by (\ref{Feq}).

\begin{assum}\label{errorcondition}
For any given $\{x^{r}\}$ with $x^{r}\rightarrow \bar x\in X:={\rm SOL}(\Lambda,b)$, there exists a subsequence $\{x^{r_j}\}$ of $\{x^{r}\}$ and $i_0\in [n]$ such that
\begin{equation}\label{ErrTT}
\lim_{r_j\rightarrow\infty} \frac{F_{i_0}(x^{r_j})}{\|x^{r_j}-\bar x\|}\neq {\bm 0}.
\end{equation}
\end{assum}

\begin{theorem}\label{Errorres}
Let $\Lambda:=(\mathcal{A}_1,\mathcal{A}_2,\cdots,\mathcal{A}_{m-1})\in \mathbb{T}_{m,n}\times \mathbb{T}_{m-1,n}\times\cdots \times\mathbb{T}_{2,n}$. Suppose that the leading tensor $\mathcal{A}_1$ is nonsingular and Assumption \ref{errorcondition} holds. Then $\|F(x)\|$ provides a local error bound for \eqref{defMS}.
\end{theorem}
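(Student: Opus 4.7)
The plan is to argue by contradiction, leveraging Assumption~\ref{errorcondition} to extract a subsequence along which a single coordinate of $F$ is, in magnitude, bounded below by a positive multiple of the distance to $X$. Once such a coordinate is available, the local error bound follows by collapsing the vector inequality $\|F(x)\|\ge |F_{i_0}(x)|$ with the trivial bound ${\rm dist}(x,X)\le\|x-\bar x\|$.

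First, I would suppose that $\|F(\cdot)\|$ fails to furnish a local error bound on any neighborhood of some fixed $\bar x\in X$. Coupling shrinking neighborhoods $B(\bar x,1/k)$ with decreasing constants $c_k=1/k$, the negation of Definition~\ref{errorbound} produces a sequence $\{x^k\}\subset\mathbb{R}^n\setminus X$ with $x^k\to\bar x$ and
$$\frac{\|F(x^k)\|}{{\rm dist}(x^k,X)}\longrightarrow 0\qquad\text{as }k\to\infty.$$
(Any index at which $x^k=\bar x$ can be discarded, since then both numerator and denominator vanish.) In particular, $x^k\ne\bar x$ for all $k$, so the quantities $\|x^k-\bar x\|$ are strictly positive.

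Next, I would apply Assumption~\ref{errorcondition} directly to this sequence: there exist a subsequence $\{x^{k_j}\}$ and an index $i_0\in[n]$ such that
$$\lim_{j\to\infty}\frac{F_{i_0}(x^{k_j})}{\|x^{k_j}-\bar x\|}=\ell,\qquad \ell\ne 0.$$
Combining the elementary bounds $\|F(x^{k_j})\|\ge |F_{i_0}(x^{k_j})|$ and ${\rm dist}(x^{k_j},X)\le\|x^{k_j}-\bar x\|$ then yields
$$\frac{\|F(x^{k_j})\|}{{\rm dist}(x^{k_j},X)}\;\ge\;\frac{|F_{i_0}(x^{k_j})|}{\|x^{k_j}-\bar x\|}\;\longrightarrow\;|\ell|>0,$$
which contradicts the displayed limit above. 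Therefore there must exist a neighborhood $N$ of $\bar x$ and a constant $c>0$ such that $\|F(x)\|\ge c\,{\rm dist}(x,X)$ for all $x\in N$, which is precisely the local error bound claimed.

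The principal obstacle is essentially bookkeeping: the content of the theorem is already packaged into Assumption~\ref{errorcondition}, and once the negation of the local error bound is turned into a concrete sequence, the contradiction is immediate. Nonsingularity of $\mathcal{A}_1$ is not used in the core estimate; it enters only implicitly, guaranteeing via Proposition~\ref{boundedness} that $X$ is bounded (hence, being closed as the zero set of a continuous map, compact), so that ${\rm dist}(\cdot,X)$ is well-behaved and the neighborhood of $\bar x$ on which the bound holds is unambiguous.
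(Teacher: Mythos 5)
Your argument is correct, and its engine is the same as the paper's: negate the error bound to manufacture a sequence along which $\|F(\cdot)\|/{\rm dist}(\cdot,X)\to 0$, invoke Assumption~\ref{errorcondition} to pin down a coordinate $i_0$, and reach a contradiction via the two elementary inequalities $\|F(x)\|\ge |F_{i_0}(x)|$ and ${\rm dist}(x,X)\le\|x-\bar x\|$. The genuine difference lies in where the violating sequence lives. You localize from the outset: the sequence is built inside shrinking balls around a fixed $\bar x\in X$, so it converges to $\bar x$ by construction and Assumption~\ref{errorcondition} applies immediately; as you correctly observe, the nonsingularity of $\mathcal{A}_1$ then plays no role in the estimate. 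The paper instead reads ``local error bound'' as a bound valid on a residual sublevel set $\{x:\|F(x)\|\le\varepsilon\}$; its violating sequence is only known to have small residual, so it could a priori escape to infinity, and nonsingularity of $\mathcal{A}_1$ is used precisely there --- normalizing and passing to the limit $x^{r}/\|x^{r}\|\to\bar x$ with $\|\bar x\|=1$ would force $\mathcal{A}_1\bar x^{m-1}={\bm 0}$, a contradiction. Once boundedness is secured, a convergent subsequence has its limit in $X$ and the proof concludes exactly as yours does. Thus the paper's conclusion is slightly stronger (uniform over a neighborhood of the whole solution set rather than near one prescribed solution) at the cost of that extra compactness step, while your localized version is all that Theorem~\ref{local con} actually consumes; both are legitimate readings of the theorem as stated. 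One small point of hygiene: Assumption~\ref{errorcondition} only asserts that the limit is nonzero, so rather than naming a finite limit $\ell$ it is cleaner to do what the paper does --- extract a further subsequence and $\varepsilon_0>0$ with $|F_{i_0}(x)|\ge\varepsilon_0\|x-\bar x\|$ --- which covers the case where the ratio is unbounded or merely bounded away from zero.
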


\begin{proof}
Suppose, on the contrary, that $\|F(x)\|$ is not a local error bound for \eqref{defMS}. Then there exists a positive number $\varepsilon$ and a sequence $\{x^{r}\}\subset \mathbb{R}^n$ such that $\|F(x^{r})\|\leq \varepsilon$ and
\begin{equation}\label{nonerror}
\frac{\|F(x^{r})\|}{{\rm dist}(x^{r},X)}\rightarrow 0.
\end{equation}
We first claim that $\{x^{r}\}$ is bounded. Otherwise, without loss of generalization, assume that $\|x^{r}\|\rightarrow \infty$ as $r\rightarrow\infty$. Since $\|F(x^{r})\|\leq \varepsilon$, we know that
\begin{align*}
\left\|\mathcal{A}_1(\bar x^{r})^{m-1}+\sum_{k=2}^{m-1}\frac{{\cal A}_k(\bar x^{r})^{m-k}}{\|x^{r}\|^{k-1}}-b/\|x^{r}\|^{m-1}\right\|=\frac{\|F(x^{r})\|}{\|x^{r}\|^{m-1}}\leq \frac{\varepsilon}{\|x^{r}\|^{m-1}},
\end{align*}
where $\bar x^{r}=x^{r}/\|x^{r}\|$. Without loss of generalization, we assume that $\bar x^{r} \rightarrow \bar x$ as $r\rightarrow\infty$. It is clear that $\|\bar x\|=1$. Letting $r\rightarrow\infty$ in the expression above leads to $\|\mathcal{A}_1\bar x^{m-1}\|=0$, which contradicts the condition that $\mathcal{A}_1$ is nonsingular. Since $\{x^{r}\}$ is bounded, we assume that $x^{r} \rightarrow x^*$ as $r\rightarrow\infty$. Furthermore, it is easy to see that $x^*\in X$. By Assumption \ref{errorcondition}, there exists a subsequence $\{x^{r_j}\}$ of $\{x^{r}\}$ and $i_0\in [n]$ such that (\ref{ErrTT}) holds. Consequently, there exists a subsequence $\{x^{r_{j_l}}\}$ of $\{x^{r_j}\}$ and $\varepsilon_0>0$, such that $|F_{i_0}(x^{r_{j_l}})|\geq \varepsilon_0\|x^{r_{j_l}}-x^*\|$, which implies $\|F(x^{r_{j_l}})\|\geq \varepsilon_0~{\rm dist}(x^{r_{j_l}},X)$ contradicting to (\ref{nonerror}).
\qed\end{proof}

To obtain a more checkable condition where the local error bound holds, we further make the following assumption.

\begin{assum}\label{Ass65}
Let $A\in \mathbb{T}_{2,n}$. For a given solution $\bar x$ of $F(x)={\bm 0}$, there exists a neighborhood $N(\bar x, {\bm r})$ of $\bar x$ and $c>0$ such that $\|A(x-\bar x)\|\geq c \|x-\bar x\|$ for any $x\in N(\bar x, {\bm r})$.
\end{assum}

It is obvious that if the matrix $A$ in Assumption \ref{Ass65} is nonsingular, then Assumption \ref{Ass65} holds.

\begin{theorem}\label{errorth2}
Let $\Lambda:=(\mathcal{A}_1,\mathcal{A}_2,\cdots,\mathcal{A}_{m-1})\in \mathbb{T}_{m,n}\times \mathbb{T}_{m-1,n}\times\cdots \times\mathbb{T}_{2,n}$. Suppose that $\mathcal{A}_{m-1}$ satisfies Assumption \ref{Ass65} on some neighborhood $N(\bar x,{\bm r})$ of $\bar x\in X:={\rm SOL}(\Lambda,b)$ and $G(x):=\sum_{k=1}^{m-2}{\cal A}_kx^{m-k}-b$ satisfies $\|G(x)-G(\bar x)\|=o(\|x-\bar x\|)$ as $x\rightarrow\bar x$. Then $\|F(x)\|$ provides a local error bound for \eqref{defMS} on $N(\bar x,{\bm r})$.
\end{theorem}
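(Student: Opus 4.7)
The plan is to exploit the linear-in-$x$ structure of the last term $\mathcal{A}_{m-1}x$ in $F$ and treat the remaining higher-degree terms as a superlinearly small perturbation near $\bar x$. Concretely, decompose
\[
F(x)=\mathcal{A}_{m-1}x+G(x),\qquad G(x):=\sum_{k=1}^{m-2}\mathcal{A}_kx^{m-k}-b,
\]
so that, using $F(\bar x)={\bm 0}$ (which gives $\mathcal{A}_{m-1}\bar x=-G(\bar x)$), one obtains the exact identity
\[
F(x)=\mathcal{A}_{m-1}(x-\bar x)+\bigl(G(x)-G(\bar x)\bigr).
\]

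First I would apply the reverse triangle inequality to this identity to get
\[
\|F(x)\|\;\geq\;\|\mathcal{A}_{m-1}(x-\bar x)\|-\|G(x)-G(\bar x)\|.
\]
Next, I would invoke Assumption \ref{Ass65} on the matrix $\mathcal{A}_{m-1}$ to bound the first term from below by $c\|x-\bar x\|$ for all $x$ in $N(\bar x,{\bm r})$. For the second term, the hypothesis $\|G(x)-G(\bar x)\|=o(\|x-\bar x\|)$ as $x\to\bar x$ produces a radius $r'\in(0,\|{\bm r}\|]$ (possibly smaller than the original one) such that $\|G(x)-G(\bar x)\|\leq \tfrac{c}{2}\|x-\bar x\|$ whenever $x$ lies in the smaller neighborhood $N(\bar x,r')\subseteq N(\bar x,{\bm r})$. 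Combining these two estimates yields
\[
\|F(x)\|\;\geq\;\tfrac{c}{2}\|x-\bar x\|\qquad\text{for all }x\in N(\bar x,r').
\]

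Finally, since $\bar x\in X$ by hypothesis, the distance to the solution set satisfies ${\rm dist}(x,X)\leq\|x-\bar x\|$, and hence
\[
\|F(x)\|\;\geq\;\tfrac{c}{2}\,{\rm dist}(x,X)
\]
for every $x\in N(\bar x,r')$, delivering the claimed local error bound with constant $c/2$.

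There is no genuine obstacle here; the argument is a clean application of the reverse triangle inequality plus the little-$o$ estimate. The only point worth flagging is that the error-bound neighborhood produced may be strictly smaller than the neighborhood $N(\bar x,{\bm r})$ appearing in Assumption \ref{Ass65}, because one must shrink the radius to absorb $\|G(x)-G(\bar x)\|$ into half of the linear lower bound supplied by $\mathcal{A}_{m-1}$; this is harmless since the conclusion only asserts a \emph{local} error bound on some neighborhood of $\bar x$.
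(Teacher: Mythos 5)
Your proof is correct and follows essentially the same route as the paper's: the identity $F(x)-F(\bar x)=\mathcal{A}_{m-1}(x-\bar x)+\bigl(G(x)-G(\bar x)\bigr)$, the reverse triangle inequality, Assumption \ref{Ass65}, and the little-$o$ hypothesis. You merely spell out the details (including the harmless shrinking of the neighborhood and the bound ${\rm dist}(x,X)\leq\|x-\bar x\|$) that the paper's one-line proof leaves implicit.
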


\begin{proof}
Since $\|F(x)\|=\|F(x)-F(\bar x)\|\geq \|\mathcal{A}_{m-1}(x-\bar x)\|-\|G(x)-G(\bar x)\|$, the desired result is immediately obtained from the given conditions.
\qed\end{proof}

\section{Levenberg-Marquardt algorithm for \eqref{defMS}}\label{Algorithm}
Notice that the model under consideration (see \eqref{Axm-1} and \eqref{defMS}) indeed is a structured system of nonlinear equations. In the literature, it has been well documented that Levenberg-Marquardt methods are efficient solvers for nonlinear equations. Therefore, in this section, we employ the most recent Levenberg-Marquardt algorithm proposed in \cite{ARC18} for \eqref{defMS} with a small modification on the LM parameter.

With the notation of $F(x)$ given by \eqref{Feq}, we can describe the Levenberg-Marquardt algorithm for \eqref{defMS} in Algorithm \ref{alg31}.

\begin{algorithm}[!htbp]
\caption{(Levenberg-Marquardt Algorithm for \eqref{defMS}).}\label{alg31}
\begin{algorithmic}[1]
\STATE Choose a positive integer $N_0$, $\mu_0>\bar \mu >0$, $\epsilon\in [1,2]$ and $0<p_0\leq p_1\leq p_2<1$. Let $x^{(0)}\in \mathbb{R}^n$
be starting points.
\WHILE{$\|(F^\prime(x^{(k)}))^\top F(x^{(k)})\|\neq 0$}
\STATE Let
\begin{equation}\label{lamk}\lambda_k=\frac{\mu_k \|F(x^{(k)})\|^\epsilon}{1+\|F(x^{(k)})\|}.\end{equation}
\STATE Compute $d_k$ by solving the following linear system of equations:
\begin{equation}\label{equation1}
[F^\prime(x^{(k)})^\top F^\prime(x^{(k)})+\lambda_k I]d=-F^\prime(x^{(k)})^\top F(x^{(k)}).
\end{equation}
\STATE Set
\begin{equation*}
\tau_k=\frac{\|F_{l(k)}\|^2-\|F(x^{(k)}+d_k)\|^2}{\|F(x^{(k)})\|^2-\|F(x^{(k)})+F^\prime(x^{(k)})d_k\|^2},
\end{equation*}
where $F_{l(k)}={\rm max}_{0\leq j\leq \chi_k}\{\|F(x^{k-j})\|\}$ and $\chi_k={\rm min}\{k,N_0\}$.
\STATE Update the next iterate $x^{(k+1)}$ by
 \begin{equation*}x^{(k+1)}:=\left\{
\begin{array}{cl}
x^{(k)}+d_k,&{\rm if}~\tau_k\geq p_0,\\
x^{(k)},&{\rm otherwise}.
\end{array}
\right.
\end{equation*}
\STATE Update the parameter $\mu_{k+1}$ by
\begin{equation*}\mu_{k+1}:=\left\{
\begin{array}{cl}
4\mu_k,&{\rm if}~\tau_k<p_1,\\
\mu_k,&{\rm if}~p_1\leq \tau_k\leq p_2,\\
{\rm max}\{\mu_k/4,\bar \mu\}, &{\rm otherwise}.
\end{array}
\right.
\end{equation*}
\ENDWHILE
\end{algorithmic}
\end{algorithm}

\begin{remark}
Note that the practical stopping criterion in Algorithm \ref{alg31} can be specified as $\|(F^\prime(x^{(k)}))^\top F(x^{(k)})\|\leq {\rm Tol}$, where `${\rm Tol}$' is a tolerance. Generally speaking, such a stopping criterion just leads to a stationary point of \eqref{defMS} with generic tensors, which may not always be a solution to \eqref{defMS}. Hence, in this paper, we use $\|F(x^{(k)})\|\leq {\rm Tol}$ in practice instead of the original one to ensure the obtained iterate $x^{(k)}$ being a solution to \eqref{defMS}, in addition to keeping the same stopping criterion when comparing Algorithm \ref{alg31} with the other methods. For the LM parameter $\lambda_k$ in \eqref{lamk}, we attach a parameter $\epsilon$ to $\|F(x^{(k)})\|$ for the purpose of improving the numerical performance of Algorithm \ref{alg31}.
\end{remark}

Below, we give the convergence results on Algorithm \ref{alg31}.  First, it is obvious that $F(x)$ defined by (\ref{Feq}) is continuously differentiable on $\mathbb{R}^n$, and Lipschitz continuous on any given bounded subset $\Omega$ in $\mathbb{R}^n$, i.e., there exists a constant $L>0$ such that
$$
\|F(x)-F(y)\|\leq L\|x-y\|,~~~\forall~x,y\in \Omega.
$$
Moreover, the Jocabian matrix function $F^\prime(x)$ of $F(x)$ is also Lipschitz continuous on any given bounded subset $\Omega$ in $\mathbb{R}^n$, i.e., there exists a constant $L>0$ such that
$$\|F^\prime(x)-F^\prime(y)\|\leq L\|x-y\|, ~~~~\forall~x,y\in \Omega.$$
Consequently, it is easy to see that
\begin{equation*}\label{LLTT} \|F(y)-F(x)-F^\prime(x)(y-x)\|\leq L\|y-x\|^2, ~~~~\forall~x,y\in \Omega.
\end{equation*}

When the leading tensor ${\mathcal A}_1$ in $\Lambda$ is nonsingular, the solution set ${\rm SOL}(\Lambda,b)$ is bounded for any $b\in{\mathbb R}^n$. Consequently, for any $x^{(0)}\in{\mathbb R}^n$, there is a scalar ${\bm c}_0>0$ such that the sequence $\{x^{(k)}\}$ generated by Algorithm \ref{alg31} belongs to the set ${\rm Lev}({\bm c}_0):=\{x\in{\mathbb R}^n | \|F(x)\|\leq {\bm c}_0\}$, which is bounded, if ${\mathcal A}_1$ in $\Lambda$ is nonsingular. As a consequence, both $F(x)$ and $F^\prime(x)$ are Lipschitz continuous on the set ${\rm Lev}({\bm c}_0)$, which implies the following global convergence theorem for Algorithm \ref{alg31}. The proof is skipped here for brevity and the reader is referred to \cite[Theorem 2.4]{ARC18} for similar details.

\begin{theorem}\label{glob con}
Suppose that the leading tensor ${\mathcal A}_1$ in $\Lambda$ is nonsingular. Then, Algorithm \ref{alg31} terminates in finite
iterations or satisfies
\begin{equation*}
\liminf_{k\rightarrow\infty}\|(F^\prime(x^{(k)}))^\top F(x^{(k)})\|=0.
\end{equation*}
\end{theorem}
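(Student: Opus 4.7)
The strategy is to adapt the nonmonotone Levenberg--Marquardt convergence framework of \cite[Theorem 2.4]{ARC18} to our polynomial-type $F$, using the nonsingularity of $\mathcal{A}_1$ to produce a bounded workspace on which $F$ and $F'$ are Lipschitz continuous.

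First I would verify the sublevel-set containment $\{x^{(k)}\}\subset {\rm Lev}({\bm c}_0)$ with ${\bm c}_0:=\|F(x^{(0)})\|$. Whenever $\tau_k\geq p_0$, the acceptance criterion yields $\|F(x^{(k+1)})\|^2\leq \|F_{l(k)}\|^2-p_0(\|F(x^{(k)})\|^2-\|F(x^{(k)})+F'(x^{(k)})d_k\|^2)\leq \|F_{l(k)}\|^2$, and when $\tau_k<p_0$ the iterate is unchanged; induction over the moving window $\chi_k=\min\{k,N_0\}$ then gives $\|F(x^{(k)})\|\leq {\bm c}_0$ for all $k$. To see that ${\rm Lev}({\bm c}_0)$ is bounded, I would replay the compactness argument of Proposition \ref{boundedness}: any sequence $\{y^r\}\subset {\rm Lev}({\bm c}_0)$ with $\|y^r\|\to\infty$ produces, after dividing $F(y^r)+b=\sum_{k=1}^{m-1}\mathcal{A}_k (y^r)^{m-k}$ by $\|y^r\|^{m-1}$, a unit-norm limit $\bar y$ of $y^r/\|y^r\|$ satisfying $\mathcal{A}_1\bar y^{m-1}={\bm 0}$, contradicting the nonsingularity of $\mathcal{A}_1$. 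Hence $\{x^{(k)}\}$ lives in a compact set on which $F$ and $F'$ are Lipschitz with some common constant $L$.

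Next, with the Lipschitz workspace established, I would invoke the standard estimates for the LM direction obtained from \eqref{equation1}: the model-decrease bound $\|F(x^{(k)})\|^2-\|F(x^{(k)})+F'(x^{(k)})d_k\|^2\geq \lambda_k\|d_k\|^2$, the step-norm bound $\|d_k\|\leq \|F'(x^{(k)})^\top F(x^{(k)})\|/\lambda_k$, and the quadratic remainder $\|F(x^{(k)}+d_k)-F(x^{(k)})-F'(x^{(k)})d_k\|\leq L\|d_k\|^2$. Together these force $|1-\tau_k|=O(1/\lambda_k)$ on an attempted step, so a sufficiently large $\lambda_k$ always guarantees acceptance. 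I would then split into two scenarios. In scenario (i), only finitely many iterations satisfy $\tau_k\geq p_0$: then eventually $x^{(k)}\equiv x^*$, the update rule drives $\mu_k\to\infty$ and hence $\lambda_k\to\infty$, forcing $\|d_k\|\to 0$, and the persistent rejection condition $\tau_k<p_1$ combined with the bounds above yields $F'(x^*)^\top F(x^*)={\bm 0}$, giving the liminf conclusion. In scenario (ii), infinitely many steps are accepted: a Grippo--Lampariello--Lucidi-type telescoping applied to the nonmonotone reference $\|F_{l(k)}\|$ shows it is eventually nonincreasing and convergent, producing $\sum_{k\in{\cal K}}\lambda_k\|d_k\|^2<\infty$ along the accepted index set ${\cal K}$; combining this with $\lambda_k\geq \bar\mu\|F(x^{(k)})\|^\epsilon/(1+{\bm c}_0)$ and $\|d_k\|\geq \|F'(x^{(k)})^\top F(x^{(k)})\|/(\lambda_k+\|F'(x^{(k)})\|^2)$ gives $\liminf_k\|F'(x^{(k)})^\top F(x^{(k)})\|=0$.

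The principal obstacle I anticipate is handling the nonmonotone reference $F_{l(k)}$ in scenario (ii): because $\|F(x^{(k)})\|$ may transiently increase, a careful window-based telescoping (precisely along the lines of \cite[Lemmas 2.1--2.3]{ARC18}) is required to convert per-step descent into an aggregate summability statement. A secondary technical point is the modified exponent $\epsilon\in[1,2]$ in the LM parameter \eqref{lamk} replacing the canonical $\epsilon=2$; since $\|F(x^{(k)})\|\leq {\bm c}_0$ throughout, $\lambda_k$ stays comparable in magnitude and decay to the original formula, so the proof of \cite[Theorem 2.4]{ARC18} transfers without structural modification. Beyond these bookkeeping items, every ingredient needed---Lipschitz bounds, iterate boundedness, and the LM normal-equation estimates---is secured by the nonsingularity hypothesis on $\mathcal{A}_1$.
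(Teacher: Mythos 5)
Your proposal follows essentially the same route as the paper: the paper likewise establishes that nonsingularity of $\mathcal{A}_1$ forces the iterates into a bounded level set ${\rm Lev}({\bm c}_0)$ on which $F$ and $F'$ are Lipschitz continuous, and then defers the remaining convergence analysis to \cite[Theorem 2.4]{ARC18}. You simply fill in more of the details (the level-set containment via the nonmonotone acceptance test, the boundedness argument mirroring Proposition \ref{boundedness}, and the two-scenario LM analysis) that the paper leaves to the cited reference.
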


Furthermore, when we suppose that the sequence $\{x^{(k)}\}$ generated by Algorithm \ref{alg31} is convergent to $x^*\in {\rm SOL}(\Lambda,b)$ and also lies in a neighborhood of $x^*$, Algorithm \ref{alg31} is then quadratically convergent (see \cite[Theorem 3.5]{ARC18}).

\begin{theorem}\label{local con}
Suppose that $\|F(x)\|$ provides a local error bound on $N(x^*,\varrho)$ for \eqref{defMS}, where $0<\varrho<1$ and $N(x^*,\varrho):=\{x\in \mathbb{R}^n~:~\|x-x^*\|\leq \varrho\}$. Then the sequence $\{x^{(k)}\}$ generated by Algorithm \ref{alg31} converges quadratically to a solution of \eqref{defMS}.
\end{theorem}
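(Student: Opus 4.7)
The plan is to follow the standard local convergence analysis for Levenberg--Marquardt methods under a local error bound, in the spirit of Yamashita--Fukushima and Fan--Yuan, adapted to the specific LM parameter $\lambda_k$ chosen in \eqref{lamk}. The argument in \cite{ARC18} (cited by the authors) provides the template; what needs checking is that the modified parameter $\lambda_k=\mu_k\|F(x^{(k)})\|^{\epsilon}/(1+\|F(x^{(k)})\|)$ with $\epsilon\in[1,2]$ still produces the correct scaling. Since $F(x)$ in \eqref{Feq} is a polynomial map, both $F$ and $F^{\prime}$ are Lipschitz continuous on the bounded set $N(x^{*},\varrho)$, which supplies the inequalities $\|F(y)-F(x)-F^{\prime}(x)(y-x)\|\leq L\|y-x\|^{2}$ and $\|F(y)-F(x)\|\leq L\|y-x\|$ needed throughout.

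First, I would set up notation: let $\bar x^{(k)}$ denote a projection of $x^{(k)}$ onto $X:={\rm SOL}(\Lambda,b)$, so that $\|x^{(k)}-\bar x^{(k)}\|={\rm dist}(x^{(k)},X)$. The local error bound gives $\|F(x^{(k)})\|\geq c\,{\rm dist}(x^{(k)},X)$, while the Lipschitz property gives $\|F(x^{(k)})\|\leq L\,{\rm dist}(x^{(k)},X)$. Hence on $N(x^{*},\varrho)$ the quantities $\|F(x^{(k)})\|$ and ${\rm dist}(x^{(k)},X)$ are equivalent up to constants. Using these equivalences together with $\epsilon\in[1,2]$ and $\mu_{k}\geq\bar\mu>0$, one checks that $\lambda_{k}$ is of order ${\rm dist}(x^{(k)},X)^{\epsilon}$ (up to a factor $1+O({\rm dist}(x^{(k)},X))$), which is the order needed to balance the linear system in \eqref{equation1}.

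Second, I would control $\|d_{k}\|$ via the optimality of $d_{k}$ in the LM subproblem. Plugging in the comparison vector $\bar x^{(k)}-x^{(k)}$ and using $F(\bar x^{(k)})={\bm 0}$ together with the quadratic remainder inequality yields
\begin{equation*}
\|F(x^{(k)})+F^{\prime}(x^{(k)})d_{k}\|^{2}+\lambda_{k}\|d_{k}\|^{2}\leq L^{2}\|x^{(k)}-\bar x^{(k)}\|^{4}+\lambda_{k}\|x^{(k)}-\bar x^{(k)}\|^{2},
\end{equation*}
from which $\|d_{k}\|=O({\rm dist}(x^{(k)},X))$ and $\|F(x^{(k)})+F^{\prime}(x^{(k)})d_{k}\|=O({\rm dist}(x^{(k)},X)^{2})\cdot(1+\lambda_{k}^{1/2}/{\rm dist}(x^{(k)},X))$. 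A second application of the Lipschitz bound on $F^{\prime}$ then produces the key inequality
\begin{equation*}
\|F(x^{(k)}+d_{k})\|\leq \|F(x^{(k)})+F^{\prime}(x^{(k)})d_{k}\|+L\|d_{k}\|^{2}=O({\rm dist}(x^{(k)},X)^{2}).
\end{equation*}
Invoking the local error bound once more at $x^{(k)}+d_{k}$ yields ${\rm dist}(x^{(k)}+d_{k},X)=O({\rm dist}(x^{(k)},X)^{2})$, the quadratic recursion we want.

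Third, I would verify that the algorithmic machinery does not spoil this. One shows that for $k$ sufficiently large the ratio $\tau_{k}\to 1$: the numerator $\|F_{l(k)}\|^{2}-\|F(x^{(k)}+d_{k})\|^{2}$ is bounded below by the dominant quadratic model decrease because of the nonmonotone reference value, while the denominator is dominated by $\|F(x^{(k)})\|^{2}$ thanks to the bound on $\|F(x^{(k)})+F^{\prime}(x^{(k)})d_{k}\|$. Thus eventually $\tau_{k}\geq p_{2}$, every step is accepted, and $\mu_{k}$ stabilizes at a value comparable to $\bar\mu$; an induction argument then keeps all iterates inside $N(x^{*},\varrho)$ and produces the quadratic rate. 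The main obstacle, as I see it, is handling the nonstandard exponent $\epsilon$ in $\lambda_{k}$ uniformly for $\epsilon\in[1,2]$: the proof in \cite{ARC18} is written for a canonical choice, and one must redo the scaling estimates so that the contributions $\lambda_{k}^{1/2}\|x^{(k)}-\bar x^{(k)}\|$ and $L\|d_{k}\|^{2}$ both remain $O({\rm dist}(x^{(k)},X)^{2})$; this is where the assumption $\epsilon\geq 1$ is needed (to make $\lambda_{k}$ small enough) together with $\epsilon\leq 2$ (to ensure $\lambda_{k}$ is not so small that $\|d_{k}\|$ blows up relative to ${\rm dist}(x^{(k)},X)$).
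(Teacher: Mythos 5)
First, note that the paper does not actually prove this theorem: it states that the proof ``is skipped here for brevity'' and refers to \cite[Theorem 3.5]{ARC18}, so the comparison is against that cited template rather than an in-text argument. Your skeleton is the right one (Lipschitz and quadratic-remainder estimates, bounding $\|d_k\|$ by minimality of the LM subproblem, showing $\tau_k\to 1$ so that steps are accepted and $\mu_k$ stabilizes, and an induction keeping the iterates in $N(x^*,\varrho)$), and you correctly identify that $\epsilon\le 2$ is what keeps $\|d_k\|=O({\rm dist}(x^{(k)},X))$.

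There is, however, a genuine gap at the central estimate. Your own displayed bound gives
$\|F(x^{(k)})+F^{\prime}(x^{(k)})d_k\|\le L\,{\rm dist}(x^{(k)},X)^2+\lambda_k^{1/2}\,{\rm dist}(x^{(k)},X)$,
and since $\lambda_k$ in \eqref{lamk} is of order ${\rm dist}(x^{(k)},X)^{\epsilon}$, the second term is only $O({\rm dist}(x^{(k)},X)^{1+\epsilon/2})$. For $\epsilon<2$ this is \emph{not} $O({\rm dist}(x^{(k)},X)^2)$ --- the factor $\lambda_k^{1/2}/{\rm dist}(x^{(k)},X)$ in your expression diverges as the iterates approach the solution set --- so the minimality argument alone yields only superlinear convergence of order $1+\epsilon/2$, and $\epsilon\ge 1$ does not repair this. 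The missing idea (which is exactly what \cite{ARC18}, following Fan--Yuan, supplies) is a singular-value decomposition of $F^{\prime}(x^{(k)})$ compared against $F^{\prime}(\bar x^{(k)})$ at the nearest solution: writing $F(x^{(k)})+F^{\prime}(x^{(k)})d_k=\lambda_k U(\Sigma^2+\lambda_k I)^{-1}U^{\top}F(x^{(k)})$ and splitting the singular values into those bounded away from zero (where the block contributes $O(\lambda_k\|F(x^{(k)})\|)=O({\rm dist}^{1+\epsilon})=O({\rm dist}^2)$ precisely because $\epsilon\ge 1$) and those of size $O({\rm dist})$ (where one shows the corresponding component of $F(x^{(k)})$ is itself $O({\rm dist}^2)$ via the error bound and matrix perturbation theory). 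Without this refinement the quadratic recursion ${\rm dist}(x^{(k)}+d_k,X)=O({\rm dist}(x^{(k)},X)^2)$ is not established for $\epsilon\in[1,2)$. A secondary point: you use $\mu_k\ge\bar\mu$ for the lower bound on $\lambda_k$, but the upper bound $\lambda_k=O(\|F(x^{(k)})\|^{\epsilon})$ also requires showing that $\mu_k$ stays bounded above despite the quadrupling rule, which must be argued (as in \cite{ARC18}) before the scaling estimates can be applied.
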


\section{Numerical experiments}\label{numTest}
As shown in Section \ref{Algorithm}, Algorithm \ref{alg31} (denoted by `LMA' throughout) has promising convergence properties. In this section, we further investigate its numerical behaviors on solving \eqref{defMS} with synthetic data. Due to the fact that for any $\mathcal{A}\in \mathbb{T}_{m,n}~(m\geq 3)$ there always exists a semi-symmetric tensor $\bar {\mathcal{A}}\in \mathbb{T}_{m,n}$ such that $\mathcal{A}x^{m-1}=\bar {\mathcal{A}}x^{m-1}$ for any $x\in \mathbb{R}^n$,  throughout this section, we consider the case where the coefficient tensors in \eqref{defMS} are semi-symmetric.

Note that the special case \eqref{mulineq} of \eqref{defMS} with ${\rm M}$-tensors and positive vector $b\in{\mathbb R}^n$ has been well studied in the recent literature. Here, we first consider model \eqref{mulineq} with different kinds of tensors (e.g., ${\rm M}$-tensors and general random tensors). Besides, we compare the proposed LMA with three benchmark algorithms, including the homotopy method \cite{Han17} (HM for short), the Newton-Gauss-Seidel method with one-step Gauss-Seidel iteration \cite{LXX17} (NGSM for short), and the quadratically convergent algorithm \cite{HLQZ18} (QCA for short). Then, we consider the generic model \eqref{defMS} and show the preliminary numerical results.

The code of the HM proposed by Han \cite{Han17} was downloaded from Han's homepage\footnote{http://homepages.umflint.edu/$\sim$lxhan/software.html}. The codes of the other three methods were written in {\sc Matlab} 2014a. Throughout, we employed the publicly shared tensor toolbox \cite{TensorT} to compute tensor-vector products and semi-symmetrization of tensors. All experiments were conducted on a DELL workstation computer with Intel(R) Xeon(R) CPU E5-2680 v3 @2.5GHz and 128G RAM running on Windows 7 Home Premium operating system.

\subsection{Solving tensor equations \eqref{mulineq}}\label{subs4-1}
As aforementioned, the system of tensor equations \eqref{mulineq} with M-tensors has been studied in recent works, e.g., see \cite{DW16,Han17,HLQZ18,LXX17,LLV18}. However, the coefficient tensor ${\mathcal A}$ may not be an M-tensor in some real-world problems (see \cite{Y18}). Therefore, we consider two cases where ${\mathcal A}$ is an M-tensor and a general tensor, respectively.

We first consider the system of tensor equations \eqref{mulineq} with semi-symmetric M-tensors. To generate an
M-tensor $\mathcal{A}$ in \eqref{mulineq}, we follow the way used in \cite{DW16}, that is, we randomly generate a nonnegative tensor ${\mathcal B}:=(b_{i_1i_2\cdots i_m})\in \mathbb{T}_{m,n}$, whose entries are uniformly distributed in $(0,1)$, and set ${\mathcal A}:=s{\mathcal I}-{\mathcal B}$ with
$$s=(1+\sigma)\cdot \max_{1\leq i\leq n}\left(\sum_{i_2,\cdots,i_m=1}^nb_{ii_2\cdots i_m}\right)\quad{\rm and}\quad \sigma=0.1.$$
Clearly, it follows from the fact
$$\rho(\mathcal B)\leq \max_{1\leq i\leq n}\left(\sum^n_{i_2,\cdots,i_m=1}b_{ii_2\cdots i_m}\right)$$
that $s>\rho({\mathcal B})$, which always ensures that ${\mathcal A}$ is a nonsingular M-tensor (see \cite{CQZ13,Qi05}). Then, we randomly generate the vector $b$ in \eqref{mulineq}, whose all entries are uniformly distributed in $(0,1)$. In this situation, we know that (\ref{mulineq}) has an unique positive solution \cite{DW16}.  For this case, we always take the starting point as $x^{(0)} = (1,1,...,1)^\top$ for all methods.

Note that we have shown that \eqref{mulineq} with a ${\rm Z}^+$-tensor has at least one solution for any $b\in{\mathbb R}^n$. From the definition of ${\rm Z}^+$-tensor, it is not easy to generate high-order and -dimensional ${\rm Z}^+$-tensors. Therefore, after the test of \eqref{mulineq} with M-tensors, we then consider a series of random semi-symmetric tensors, which are not necessarily ${\rm Z}^+$-tensors. Specifically, we generate random tensors ${\mathcal A}:=(a_{i_1i_2\cdots i_m})\in \mathbb{T}_{m,n}$, whose entries are uniformly distributed in $(-5,5)$. To ensure the problem under test has at least one solution, we first randomly generate a point $x^*\in\mathbb{R}^n$ whose entries are uniformly distributed in $(0,1)$, and then let $b = \mathcal{A}(x^*)^{m-1}$. It is clear that $x^*$ is a solution of \eqref{mulineq} with $(\mathcal{A},b)$. For this case, we choose the initial point $x^{(0)} =x^* +  (1,1,...,1)^\top$ for all methods.

Observing that $\lambda_k>0$ can lead to $F^\prime(x^{(k)})^\top F^\prime(x^{(k)}) + \lambda_kI$ in \eqref{equation1} being a positive definite matrix, we can gainfully compute the descent direction $d_k$ directly by the `{\sf left matrix divide}: $\backslash$', which is roughly same as the multiplication of the inverse of a matrix and a vector. For the linear subproblem of QCA in \cite{HLQZ18}, we employ the solver `{\sf bicg}' to it as suggested by the authors.

As suggested in \cite{Han17} and further verified in \cite{HLQZ18}, we implement all methods to solve the scaled system of \eqref{mulineq} instead of the original one, i.e., solving
\[\label{scale_sys}\hat{\mathcal A}x^{m-1}=\hat b\]
instead of directly finding solution to \eqref{mulineq}, where $\hat{\mathcal A}:={\mathcal A}/\omega$ and $\hat b:=b/\omega$ with $\omega$ being the largest value among the absolute values of components of ${\mathcal A}$ and the entries of $b$. When $\|\hat{\mathcal A}x^{m-1}-\hat b\|_2 \leq 10^{-12}$ is satisfied or the number of iterations exceeds $1000$, we terminate the methods and return solutions.

As suggested in \cite{ARC18}, we take $p_0=10^{-4}$, $p_1= 0.25$, $p_2 = 0.75$, $\bar \mu=10^{-8}$, $\mu_0=1$, $N_0=5$ for the proposed LMA. Additionally, note that $\epsilon \in [1,2]$ is a flexible parameter for LMA. It is unknown which value is better for the problem under consideration. Thus, we first investigate the numerical sensitivity of $\epsilon$ for our problem. Here, we test four values of $\epsilon$, i.e., $\epsilon=\{1,1,25,1,75,2\}$. Since all the data is generated randomly, we test $100$ groups of random data $({\mathcal A},b)$ for each pair of $(m,n)$ and report some numerical results. Throughout, `{\sf itr}' represents the average number of iterations; `{\sf time}' denotes the average computing time in seconds; `{\sf resi}' represents the average residual $\|\hat{\mathcal A}x^{m-1} - \hat{b}\|$ of the scaled system; and `{\sf sr}' denotes the success rate in $100$ groups of data sets $({\mathcal A},b)$ for each pair of $(m,n)$. Here, we think that the method is successful if the residual $\|\hat{\mathcal A}(x^{(k)})^{m-1} - \hat{b}\| \leq 10^{-12}$ in $1000$ iterations.

Tables \ref{tab1} and \ref{tab2} summarize the sensitivity of $\epsilon$ for M-tensors and general random-tensors, respectively. The results clearly show that LMA performs well for \eqref{mulineq} with M-tensors and positive vectors $b$. Especially, it seems from Table \ref{tab1} that $\epsilon=2$ is the best value in terms of taking the least average iterations, which suggests taking such a value for \eqref{mulineq} with M-tensors. Even though we can not verify that whether the coefficient tensors ${\mathcal A}$ are ${\rm Z}^+$-tensors, it can be seen from Table \ref{tab2} that the proposed LMA is also reliable with a high success rate for \eqref{mulineq} with general tensors. Moreover, it seems that $\epsilon=1$ is slightly better than the other three values for general cases.

\begin{table}[!htbp]
\caption{Numerical sensitivity of $\epsilon$ to LMA for \eqref{scale_sys} with M-tensors.}\label{tab1}
{
\def\temptablewidth{1\textwidth}
\begin{tabular*}{\temptablewidth}{@{\extracolsep{\fill}}llll}\toprule
& $   \epsilon= 1$ && $  \epsilon = 1.25$\\
\cline{2-2} \cline{4-4}
$(m,n)$&itr / time / resi  / sr && itr / time / resi  / sr \\\midrule
$( 3,20)$ & 9.07 / 0.05  / 1.00$\times 10^{-13 }$/ 1.00 &&  9.00 / 0.05  / 8.46$\times 10^{-15 }$/ 1.00     \\
$( 3,50)$ & 11.00 / 0.06  / 1.15$\times 10^{-17 }$/ 1.00 &&  11.00 / 0.05  / 7.59$\times 10^{-18 }$/ 1.00 \\
$( 3,100)$ & 12.00 / 0.13  / 5.10$\times 10^{-17 }$/ 1.00 &&  12.00 / 0.12  / 5.50$\times 10^{-18 }$/ 1.00   \\
$( 4,50)$ & 14.00 / 0.38  / 2.38$\times 10^{-15 }$/ 1.00 &&  14.00 / 0.39  / 4.37$\times 10^{-17 }$/ 1.00  \\
$( 4,100)$ & 16.00 / 5.70  / 3.76$\times 10^{-17 }$/ 1.00 &&  16.00 / 5.77  / 1.16$\times 10^{-18 }$/ 1.00  \\
$( 5,20)$ & 14.01 / 0.21  / 2.32$\times 10^{-14 }$/ 1.00 &&  14.00 / 0.21  / 5.21$\times 10^{-16 }$/ 1.00   \\
$( 5,50)$ & 17.01 / 19.67  / 1.73$\times 10^{-13 }$/ 1.00 &&  17.00 / 19.74  / 1.10$\times 10^{-14 }$/ 1.00   \\\bottomrule
& $\epsilon = 1.75$ && $\epsilon = 2$\\
\cline{2-2} \cline{4-4}
$(m,n)$& itr / time / resi  / sr && itr / time / resi  / sr \\\midrule
$( 3,20)$ & 9.00 / 0.05  / 3.46$\times 10^{-17 }$/ 1.00 &&  8.94 / 0.04  / 2.76$\times 10^{-14 }$/ 1.00 \\
$( 3,50)$ & 10.20 / 0.05  / 2.26$\times 10^{-13 }$/ 1.00 &&  10.01 / 0.05  / 1.00$\times 10^{-13 }$/ 1.00 \\
$( 3,100)$ & 12.00 / 0.11  / 4.03$\times 10^{-18 }$/ 1.00 &&  11.98 / 0.10  / 1.71$\times 10^{-14 }$/ 1.00 \\
$( 4,50)$ & 13.52 / 0.37  / 2.78$\times 10^{-13 }$/ 1.00 &&  13.06 / 0.36  / 2.03$\times 10^{-13 }$/ 1.00 \\
$( 4,100)$ & 15.01 / 5.38  / 2.94$\times 10^{-13 }$/ 1.00 &&  15.00 / 5.36  / 9.46$\times 10^{-14 }$/ 1.00 \\
$( 5,20)$ & 13.37 / 0.20  / 1.78$\times 10^{-13 }$/ 1.00 &&  13.03 / 0.20  / 1.34$\times 10^{-13 }$/ 1.00 \\
$( 5,50)$ & 17.00 / 19.60  / 7.95$\times 10^{-17 }$/ 1.00 &&  16.91 / 19.50  / 6.40$\times 10^{-14}$/ 1.00 \\\bottomrule
\end{tabular*}}
\end{table}

\begin{table}[!htbp]
\caption{Numerical sensitivity of $\epsilon$ to LMA for \eqref{scale_sys} with general random tensors.}\label{tab2}
{
\def\temptablewidth{1\textwidth}
\begin{tabular*}{\temptablewidth}{@{\extracolsep{\fill}}llllll}\toprule
& $   \epsilon= 1$ && $  \epsilon = 1.25$\\
\cline{2-2} \cline{4-4}
$(m,n)$&itr / time / resi  / sr && itr / time / resi  / sr \\\midrule
$( 3,20)$ & 10.02 / 0.05  / 8.08$\times 10^{-14}$  / 0.93 &&  10.14 / 0.05  / 3.14$\times 10^{-14}$ / 0.92  \\
$( 3,50)$ & 14.19 / 0.07  / 6.15$\times 10^{-14}$ / 0.88 &&  14.13 / 0.07  / 3.42$\times 10^{-14}$ / 0.85  \\
$( 3,100)$ & 16.57 / 2.78  / 3.97$\times 10^{-14}$ / 0.74 &&  17.93 / 2.91  / 8.06$\times 10^{-14}$ / 0.75 \\
$( 4,50)$ & 17.50 / 0.49  / 6.77$\times 10^{-14}$ / 0.84 &&  17.32 / 0.47  / 8.10$\times 10^{-14}$ / 0.78 \\
$( 4,100)$ & 20.71 / 149.76  / 4.30$\times 10^{-14}$ / 0.66 &&  22.42 / 158.99  / 9.51$\times 10^{-14}$ / 0.74 \\
$( 5,20)$ & 18.58 / 0.27  / 6.67$\times 10^{-14}$ / 0.77 &&  20.00 / 0.30  / 5.80$\times 10^{-14}$ / 0.72 \\
$( 5,50)$ & 33.75 / 38.23  / 5.33$\times 10^{-14}$ / 0.65 &&  33.42 / 37.78  / 6.46$\times 10^{-14}$ / 0.55\\\bottomrule
& $\epsilon = 1.75$ && $\epsilon = 2$\\
\cline{2-2} \cline{4-4}
$(m,n)$& itr / time / resi  / sr && itr / time / resi  / sr \\\midrule
$( 3,20)$ &  11.06 / 0.05  / 5.07$\times 10^{-14}$ / 0.94 &&  11.45 / 0.06  / 6.67$\times 10^{-14}$ / 0.95 \\
$( 3,50)$ &  17.09 / 0.08  / 6.90$\times 10^{-14}$ / 0.85 &&  18.12 / 0.08  / 5.56$\times 10^{-14}$ / 0.81 \\
$( 3,100)$ &  22.31 / 3.60  / 5.60$\times 10^{-14}$ / 0.75 &&  29.54 / 4.73  / 6.01$\times 10^{-14}$ / 0.70 \\
$( 4,50)$ & 26.41 / 0.73  / 6.11$\times 10^{-14}$ / 0.71 &&  29.02 / 0.78  / 7.10$\times 10^{-14}$ / 0.64 \\
$( 4,100)$ & 32.95 / 236.37  / 5.86$\times 10^{-14}$ / 0.63 &&  40.88 / 293.57  / 7.20$\times 10^{-14}$ / 0.57 \\
$( 5,20)$ & 20.55 / 0.30  / 5.90$\times 10^{-14}$ / 0.83 &&  17.13 / 0.26  / 3.62$\times 10^{-14}$ / 0.71 \\
$( 5,50)$ & 39.41 / 44.63  / 6.62$\times 10^{-14}$ / 0.63 &&  44.64 / 50.20  / 4.97$\times 10^{-14}$ / 0.58 \\\bottomrule
\end{tabular*}}
\end{table}

To compare the proposed LMA with the state-of-the-art solvers HM, QCA, and NGSM, we conduct seven pairs of $(m,n)$ and also randomly generate $100$ groups of $({\mathcal A},b)$ for every $(m,n)$. The average performance of the four methods is listed in Tables \ref{tab3} and \ref{tab4}. To show the evolutions of the residual $\|\hat{\mathcal A}x^{m-1} - \hat{b}\|$ with respect to iterations, we present two plots in Fig. \ref{Fig1}, which also shows the quadratic convergence rate of LMA. Moreover, we can see from Fig. \ref{Fig1} that both LMA and QCA are monotone algorithms, whereas both HM and NGSM are nonmonotone versions. Such a monotone behavior potentially supports that both LMA and QCA perform better than both HM and NGSM in many cases. Accordingly, we can conclude from Table \ref{tab3} and Fig. \ref{Fig1} that the proposed LMA is competitive to the promising HM \cite{Han17} and QCA \cite{HLQZ18} when dealing with \eqref{mulineq} with M-tensors and positive vectors $b$.

\begin{table}[!htbp]
\caption{Numerical comparison of the four algorithms for \eqref{scale_sys} with M-tensors.}\label{tab3}
{
\def\temptablewidth{1\textwidth}
\begin{tabular*}{\temptablewidth}{@{\extracolsep{\fill}}llllll}\toprule
& LMA && HM \cite{Han17}\\
\cline{2-2} \cline{4-4}
$(m,n)$&itr / time / resi  / sr && itr / time / resi  / sr \\\midrule
$( 3,20)$ & 8.96 / 0.04  / 2.37$\times 10^{-14 }$/ 1.00 &&  8.92 / 0.08  / 1.29$\times 10^{-14 }$/ 1.00 \\
$( 3,50)$ & 10.00 / 0.05  / 8.00$\times 10^{-14 }$/ 1.00 &&  9.00 / 0.08  / 2.29$\times 10^{-14 }$/ 1.00 \\
$( 3,100)$ & 11.97 / 1.97  / 2.37$\times 10^{-14 }$/ 1.00 &&  9.00 / 2.74  / 3.16$\times 10^{-14 }$/ 1.00 \\
$( 4,50)$ & 13.05 / 0.36  / 2.18$\times 10^{-13 }$/ 1.00 &&  9.00 / 0.44  / 4.30$\times 10^{-13 }$/ 1.00 \\
$( 4,100)$ & 15.00 / 5.36  / 1.05$\times 10^{-13 }$/ 1.00 &&  11.00 / 139.33  / 2.63$\times 10^{-17 }$/ 1.00 \\
$( 5,20)$ & 13.09 / 0.19  / 9.58$\times 10^{-14 }$/ 1.00 &&  10.00 / 0.26  / 7.57$\times 10^{-14 }$/ 1.00 \\
$( 5,50)$ & 16.92 / 19.80  / 6.49$\times 10^{-14 }$/ 1.00 &&  11.00 / 22.00  / 1.22$\times 10^{-15 }$/ 1.00 \\
\bottomrule
& QCA \cite{HLQZ18} && NGSM \cite{LXX17}\\
\cline{2-2} \cline{4-4}
$(m,n)$& itr / time / resi  / sr && itr / time / resi  / sr \\\midrule
$( 3,20)$ &  8.07 / 0.04  / 4.03$\times 10^{-14 }$/ 1.00 &&  52.00 / 0.11  / 5.70$\times 10^{-13 }$/ 1.00 \\
$( 3,50)$ &  9.07 / 0.05  / 8.21$\times 10^{-14 }$/ 1.00 &&  56.41 / 0.13  / 5.66$\times 10^{-13 }$/ 1.00 \\
$( 3,100)$ &  9.82 / 2.16  / 5.78$\times 10^{-14 }$/ 1.00 &&  60.61 / 1.74  / 6.57$\times 10^{-13 }$/ 1.00 \\
$( 4,50)$ &  10.68 / 0.31  / 8.46$\times 10^{-14 }$/ 1.00 &&  73.28 / 0.97  / 4.19$\times 10^{-13 }$/ 1.00 \\
$( 4,100)$ &  11.58 / 79.73  / 6.86$\times 10^{-14 }$/ 1.00 &&  72.52 / 12.31  / 6.23$\times 10^{-13 }$/ 1.00 \\
$( 5,20)$ &  10.51 / 0.17  / 1.09$\times 10^{-13 }$/ 1.00 &&  80.78 / 0.59  / 4.34$\times 10^{-13 }$/ 1.00 \\
$( 5,50)$ &  12.14 / 13.71  / 1.28$\times 10^{-13 }$/ 1.00 &&  72.63 / 40.36  / 6.02$\times 10^{-13 }$/ 1.00 \\
\bottomrule
\end{tabular*}}
\end{table}

\begin{figure}[!htbp]
\includegraphics[width=0.499\textwidth]{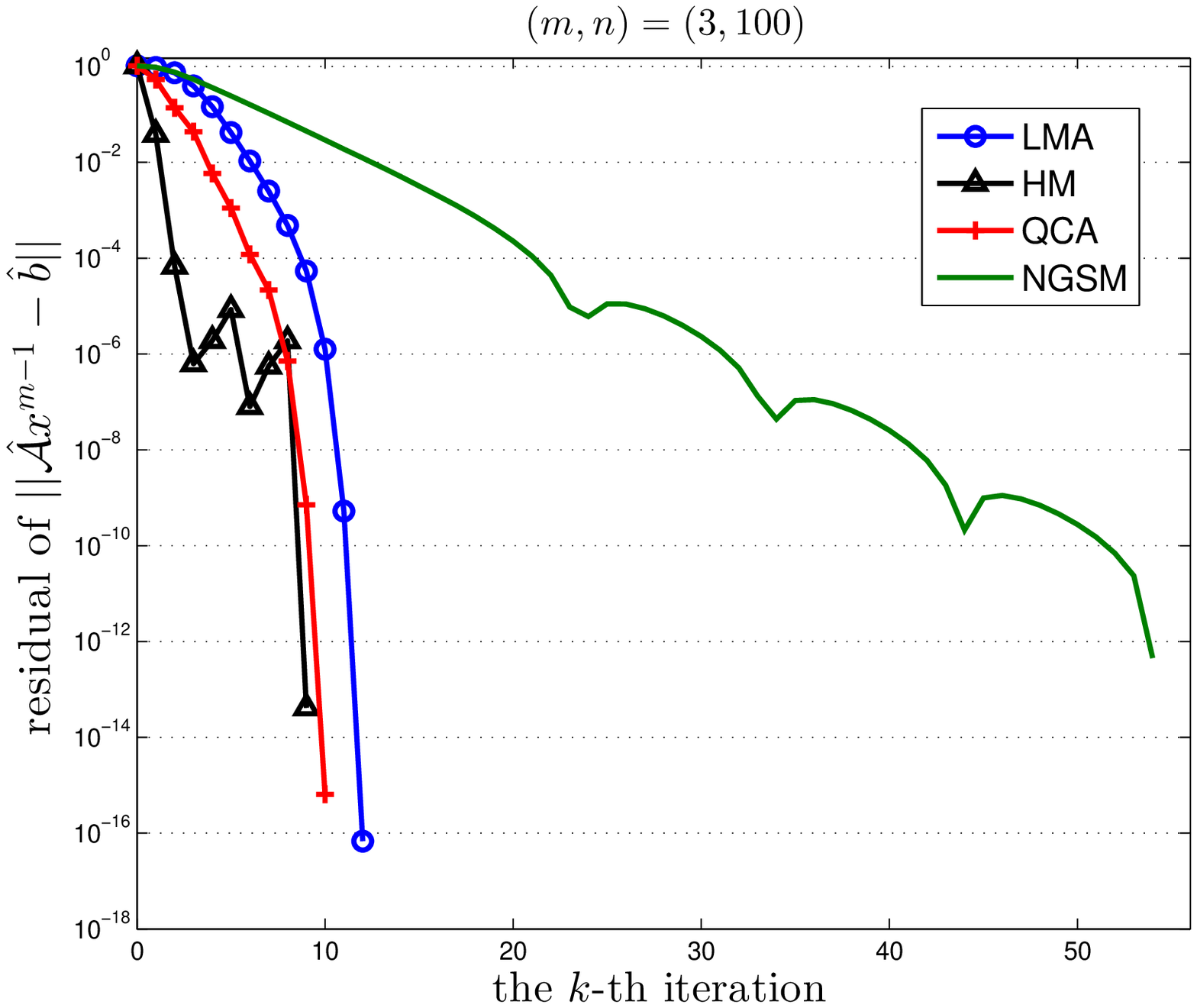}
\includegraphics[width=0.499\textwidth]{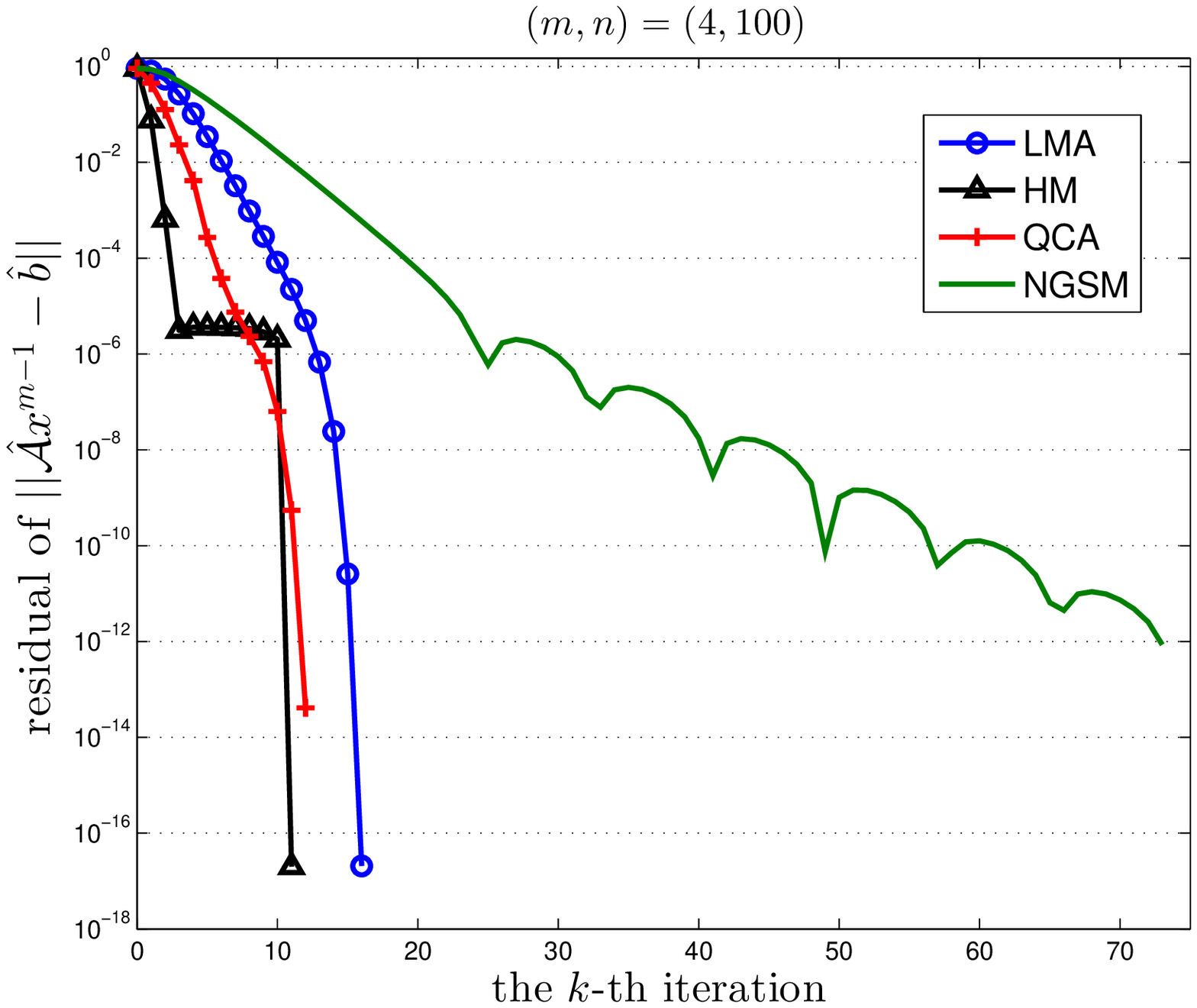}
\caption{Evolutions of the residual $\|\hat{\mathcal A}x^{m-1} - \hat{b}\|$ with respect to iterations.}
\label{Fig1}
\end{figure}

Notice that both HM \cite{Han17} and QCA \cite{HLQZ18} are tailored for \eqref{mulineq} with M-tensors and positive vectors $b$, and the convergence of NGSM \cite{LXX17} relies on the positive definiteness of $\mathcal{A}x^{m-2}$, which is a comparatively restrictive condition. Hence, it is not clear that whether HM, QCA, and NGSM are still able to find solutions of \eqref{mulineq} with general tensors. It is noteworthy that, in our experiments, we will use the symbol `--' to denote `{\sf itr}',  `{\sf time}',  `{\sf resi}',  `{\sf sr}' if the method can not get a solution satisfying $\|\hat {\mathcal A}(x^{(k)})^{m-1}-\hat b\|\leq 10^{-12}$ in $1000$ iterations. From the data reported in Table \ref{tab4}, we can see that HM, QCA, and NGSM fail to finding solutions of \eqref{mulineq} with generic random tensors under the preset tolerance. Actually, we have some more results on lower dimensional cases, e.g., $n=\{5,10,20\}$, which show that HM, QCA, and NGSM usually obtain satisfactory solutions in a very low probability (less than $10\%$). However, the proposed LMA can successfully find a solution with a high probability, which is a good news for finding solutions to generalized tensor equations.

\begin{table}
\caption{Numerical comparison of the four algorithms for \eqref{scale_sys} with general tensors.}\label{tab4}
\def\temptablewidth{1\textwidth}
\begin{tabular*}{\temptablewidth}{@{\extracolsep{\fill}}llll}\toprule
& LMA && HM / QCA / NGSM\\
\cline{2-2}\cline{4-4}
$(m,n)$& itr / time / resi / sr && itr / time / resi  / sr  \\\midrule
$(3,20)$ & 11.28 / 0.06  / 6.64$\times 10^{-14 }$/ 0.94 &&  -- / --  / -- / -- \\
$(3,50)$ & 14.43 / 0.07  / 1.11$\times 10^{-13 }$/ 0.75 &&  -- / --  / -- / --  \\
$(3,100)$ & 15.90 / 1.09  / 4.82$\times 10^{-14 }$/ 0.81 &&  -- / --  / -- / --  \\
$(4,50)$ & 16.45 / 0.53  / 6.63$\times 10^{-14 }$/ 0.74 &&  -- / --  / -- / --   \\
$(4,100)$ & 23.27 / 9.66  / 7.25$\times 10^{-14 }$/ 0.83 &&  -- / --  / -- / --  \\
$(5,20)$ & 18.11 / 0.29  / 7.36$\times 10^{-14 }$/ 0.83 &&  -- / --  / -- / --  \\
$(5,50)$ & 29.89 / 39.25  / 6.05$\times 10^{-14 }$/ 0.53 &&  -- / --  / -- / --  \\ \bottomrule
\end{tabular*}
\end{table}

\subsection{Solving generalized tensor equations \eqref{defMS}}\label{subs4-2}
In the last subsection, we can see that the proposed LMA is a probabilistic reliable solver for \eqref{mulineq} with M-tensors and general tensors. However, the theoretical and algorithmic results are mainly devoted to the generalized tensor equations \eqref{defMS}. Hence, we are further concerned with the numerical performance of LMA for \eqref{defMS}. Specifically, we consider the case where ${\mathcal A}_i\in{\mathbb T}_{(4-i+1),n},\;i=1,2,3$. As tested in Section \ref{subs4-1}, we consider two scenarios where ${\mathcal A}_i, i=1,2,3$ are M-tensors and generic random tensors, respectively. Moreover, we follow the way used in Section \ref{subs4-1} to generate ${\mathcal A}_i~ (i=1,2,3)$ and $b$. Throughout, the initial point $x^{(0)}$ is taken as $x^{(0)}=(1,1,\cdots,1)^\top$, and all parameters of LMA are taken as the values used in Section \ref{subs4-1}. The stopping criterion for LMA is set as
$$\| {\cal A}_1(x^{(k)})^{3}+{\cal A}_2(x^{(k)})^{2}+{\cal A}_{3}(x^{(k)}) - b\| \leq 10^{-6},$$
and the maximum iteration is taken as $1000$. Denote the order of ${\mathcal A}_i$ by $m_i$. In our experiments, we conduct five groups of $(m_1,m_2,m_3,n)$ with randomly generated $100$ groups of data sets $({\mathcal {A}_1, \mathcal {A}_2, \mathcal {A}_3}, b)$ for each scenario.

The results are listed in Table \ref{tab5}. For the case where $\mathcal{A}_i~(i=1,2,3)$ are M-tensors, it is easy to see from Table \ref{tab5} that LMA can always successfully find a solution of \eqref{defMS}. Even for the case where $\mathcal{A}_i~(i=1,2,3)$ are generic random tensors,  the proposed LMA can also find a solution to \eqref{defMS} in a relatively high probability.

\begin{table}[!htbp]
\caption{Numerical performances of LMA for \eqref{defMS} with M-tensors and general tensors.}\label{tab5}
{
\def\temptablewidth{1\textwidth}
\begin{tabular*}{\temptablewidth}{@{\extracolsep{\fill}}llll}\toprule
& M-tensors && General tensors\\
\cline{2-2} \cline{4-4}
$(m_1,m_2,m_3,n)$&itr / time / resi / sr && itr / time / resi / sr \\\midrule
$(4,3,2,5)$&6.75 / 0.07 / 8.10$\times 10^{-8}$ / 1.00 && 9.39 / 0.09 / 8.13$\times 10^{-8 }$ / 0.90 \\
$(4,3,2,10)$&8.05 / 0.08 / 8.18$\times 10^{-8}$ / 1.00 && 16.08 / 0.14 / 9.55$\times 10^{-8}$ / 0.79 \\
$(4,3,2,20)$&9.90 / 0.09 / 6.13$\times 10^{-8}$ / 1.00 && 29.52 / 0.27 / 1.10$\times 10^{-7 }$ / 0.50 \\
$(4,3,2,50)$&12.00 / 0.40 / 2.48$\times 10^{-10}$ / 1.00 && 53.32 / 1.67 / 8.07$\times 10^{-8 }$ / 0.56 \\
$(4,3,2,100)$&13.61 / 4.94 / 2.84$\times 10^{-7}$ / 1.00 && 66.26 / 23.10 / 1.22$\times 10^{-7 }$ / 0.42 \\ \bottomrule
\end{tabular*}}
\end{table}

\section{Conclusion}\label{Concl}
In this paper, we considered a class of generalized tensor equations, which is an extension of the newly introduced tensor equations in \cite{DW16}. To study the existenceness of solutions, we first introduce a class of so-named ${\rm Z}^+$-tensors, which includes many well-known structured tensors such as P-tensors as its special type. With the help of degree theory, we showed that the solution set of GTEs is nonempty and compact when \eqref{defMS} has a leading ${\rm Z}^+$-tensor. Moreover, we established the local error bounds under some appropriate conditions and proposed a Levenberg-Marquardt algorithm to find a solution of \eqref{defMS} including its special case \eqref{mulineq}. Computational results show that the proposed LMA performs well for (generalized) tensor equations with M-tensors and generic random tensors. However, our algorithm still fails in some cases due to the starting point perhaps being far way the true solution of the problem. So, can we design structure-exploiting algorithms which are independent on the starting point? This is one of our future concerns.

\medskip
\begin{acknowledgements}
C. Ling and H. He were supported in part by National Natural Science Foundation of China (Nos. 11571087 and 11771113) and Natural Science Foundation of Zhejiang Province (LY17A010028).
\end{acknowledgements}


\end{document}